\newtheorem{observation}[theorem]{Observation}
\newcommand\Worms{{\mathbb W}}
 \mathchardef\mhyphen="2D
\newcommand{\T}{\ensuremath{\mathrm{T}}\xspace}
\newcommand{\U}{\ensuremath{\mathrm{U}}\xspace}
\newcommand{\RFN}{\ensuremath{{\sf RFN}}\xspace}
\newcommand{\RR}{\ensuremath{{\sf RR}}\xspace}
\newcommand{\glp}{{\ensuremath{\mathsf{GLP}}}\xspace}
\newcommand{\pra}{\ensuremath{{\mathrm{PRA}}}\xspace}
\newcommand{\isig}[1]{{\ensuremath {\mathrm{I}\Sigma_{#1}}}\xspace}
\newcommand{\EA}{\ensuremath{{\rm{EA}}}\xspace}
\newcommand{\la}{\langle}
\newcommand{\ra}{\rangle}
\newcommand{\modal}{{\bf mod}}
\newcommand{\ea}{\ensuremath{{\mathrm{EA}}}\xspace}
\newcommand{\pa}{\ensuremath{{\mathrm{PA}}}\xspace}
\title{The Reduction Property Revisited}
\author{Nika Pona\inst{1} \and Joost J. Joosten\inst{1}}
\institute{University of Barcelona}
\begin{document}
\maketitle
\begin{abstract}
In this paper we will study an important but rather technical result which is called The Reduction Property. The result tells us how much arithmetical conservation there is between two arithmetical theories. Both theories essentially speak about the fundamental principle of reflection: if a sentence is provable then it is true. The first theory is axiomatized using reflection axioms and the second theory uses reflection rules. The Reduction Property tells us that the first theory extends the second but in a conservative way for a large class of formulae. 

We extend the Reduction Property in various directions. Most notably, we shall see how various different kind of reflection axioms and rules can be related to each other.
Further, we  extend the Reduction Property to transfinite reflection principles. Since there is no satisfactory (hyper) arithmetical interpretation around yet, this generalization shall hence be performed in a purely algebraic setting.

For the experts: a consequence of the classical Reduction Property characterizes the $\Pi^0_{n+1}$ consequences and tells us that for any theories $U$ and $T$ of the right complexity we have
\[
U + {\sf Con}_{n+1}(T) \equiv_{\Pi^0_{n+1}} U \cup \{ {\sf Con}_n^k(T)\mid k<\omega\}. 
\]
We will compute which theories can be put at the right-hand side if we are interested in $\Pi^0_j$ formulas with $j{\leq}n$. We answer the question also in a purely algebraic setting where $\Pi^0_j$-conservation will be suitably defined. The algebraic turn allows for generalizations to transfinite consistency notions.
\end{abstract}

\section{Introduction}

G\"odel's celebrated second incompleteness theorem roughly states that any reasonable theory will not prove its own consistency. A theory $T$ is called consistent if no contradiction can be proven. We shall consider theories that contain a minimal amount of arithmetic where consistency is thus equivalent to stating that $0=1$ cannot be proven. 

% In the era of computers that we live in it does not come as a surprise that
We can represent syntactical objects such as proofs by numbers, just as a text file will be represented by a binary number inside a computer. The numbers representing syntax in arithmetic are called G\"odel numbers. Simple operations on syntax like substitution correspond to easy arithmetical operations on the corresponding G\"odel numbers.  In this vein, a theory $T$ with an easy axiom set allows for an arithmetical formula ${\sf Axiom}_T (x)$ that represents this axiom set in the sense that 
\begin{equation}\label{equation:axiomsDefined}
\mathbb N \models {\sf Axiom}_T (x) \ \ \ \Longleftrightarrow \ \ \ x \mbox{ is the G\"odel number of an axiom of $T$}.
\end{equation}
Using this ${\sf Axiom}_T (x)$ formula one can, as G\"odel did in his seminal paper \cite{OnFormalyUndecidableBlaBlaBla}, write a provability predicate $\Box_T$ for the theory $T$ so that
\[
\mathbb N \models \Box_T (x) \ \ \ \Longleftrightarrow \ \ \ x \mbox{ is the G\"odel number of a formula that has a proof in $T$}.
\]
For the sake of readability we shall refrain from differentiating between a syntactical object like a formula $\varphi$ and its G\"odel number as the context should always make clear which is meant where. With this reading convention and a provability predicate as above we can now write G\"odel's second incompleteness theorem succinctly as $T \nvdash \neg \Box_T 0=1$, or, equivalently as: 
\begin{equation}\label{equation:GoedelSecond}
T \nvdash \Box_T 0=1 \to 0=1.
\end{equation}
This formulation of G\"odel's second incompleteness theorem readily suggests a general format and strengthening which is called \emph{reflection}: that what is provable, is actually true. In symbols, $\Box_T \varphi \to \varphi$. 

Philosophically speaking, reflection is an interesting principle. It seems that if one commits to the axioms of a theory $T$, one should also commit to reflection over $T$, yet reflection itself does not follow from the axioms of $T$ as \eqref{equation:GoedelSecond} showed us. As such, and due to the many applications that we shall see, reflection is a fundamental concept in mathematical logic and in the study on the foundations of mathematics.

In this paper, we shall study certain aspects of reflection. In particular, we will study a relation between reflection formulated as an axiom on the one hand and reflection formulated as a rule on the other hand. To formulate the exact statement of our study and provide it with due motivation we first need some definitions and notation. 

Namely, for various reasons it turns out to be natural, important and useful to restrict the formulas $\varphi$ that occur in the reflection principle $\Box_T \varphi \to \varphi$ to certain natural formula classes. Thus we should first say some words on our syntax. For the sake of the paper being self-contained we shall outline the syntactical notions and refer the reader to any standard work (e.g.~\cite{HajekAndPudlak}) on the topic for the details. 

In this paper we will only consider theories in the language of arithmetic, although all results can be generalized to a broader setting. Thus, our language will consist of the usual arithmetic constants $0$ and $1$, operations $+$, $\cdot$ and $2^x$ for addition, multiplication and exponentiation and, the binary relation $\leq$. Terms are defined as usual using these symbols. We call a formula \emph{bounded} if any quantified variable $x$ is bounded by some term $t$ that does not contain $x$. Thus, we only allow quantifiers of the form $\forall\, x{\leq}t$ or $\exists\, x{\leq}t$. The set of all bounded formula is denoted by $\Delta_0$. We inductively define $\Pi_0 := \Sigma_0 := \Delta_0$ and $\Pi_{n+1} := \{ \forall x_0, \ldots, x_m \ \varphi \mid \varphi \in \Sigma_n\}$ and likewise $\Sigma_{n+1} := \{ \exists x_0, \ldots, x_m \ \varphi \mid \varphi \in \Pi_n\}$.

Tarski proved that there is no arithmetical formula $\mathrm{True}(x)$ that is true in the standard model of arithmetic of exactly the G\"odel numbers of formulas that are true in the standard model of arithmetic. However, it is well-known that partial truth predicates do exist. For example, we have a predicate $\mathrm{True}_{\Pi_{n}} (x)$ so that for every formula $\pi$ in $\Pi_n$ we have $\mathbb N \models \mathrm{True}_{\Pi_{n}} (\pi) \ \leftrightarrow \ \pi$. For all of the above formula classes, such a partial truth predicate exists and we shall use them freely throughout the paper. Further good properties of the partial truth-predicates are that the equivalence is actually provable in \ea. Moreover, the complexity of truth predicates are as high as the formula class it speaks about. Thus, for example, $\mathrm{True}_{\Pi_{n}}$ is of complexity $\Pi_n$.

The theory \ea of \emph{Elementary Arithmetic} is given by the defining axioms for the arithmetical symbols together with the induction formulas 
\[
I_\varphi := \varphi (0) \wedge \forall \,  x\ \big(\varphi (x) \to \varphi (x+1)\big) \ \longrightarrow \ \forall \, x \ \varphi (x)
\]
for each bounded formula $\varphi$. An arbitrary theory $T$ shall be called \emph{elementary representable} whenever its axiom-set can be defined by a $\Delta_0$ formula in the sense of \eqref{equation:axiomsDefined}. For example, \ea is elementary representable. 

For a natural number $n$ we denote by $\overline n$ its \emph{numeral} which is a term that evaluates to $n$. It is standard to take $\overline n := 0+\overbrace{1+\ldots+1}^{\mbox{ $n$ times}}$. By $\Box_T \varphi (\dot x)$ we shall denote a predicate with free variable $x$ that for each number $x$ states the provability in $T$ of the formula $\varphi (\overline x)$. We are now ready to formulate the central notion of this paper.

\begin{definition} Given an elementary presentable theory \T, and for $n$ a natural number, the uniform reflection principle $\RFN_{\Sigma_n}(\T)$ is the set of sentences
\[
\forall \vec x \ \big ( \, \square_{\T} \varphi(\dot{\vec x}) \rightarrow \varphi ( \vec x) \, \big )
\]
for all $\varphi(\vec x) \in \Sigma_n.$ 
\end{definition}
The principle $\RFN_{\Pi_n}(\T)$ is defined similarly. For various reasons the partial reflection principles are natural and interesting. For one, it is easy to see that they lead to ever increasing independent principles: stronger and stronger versions of the second incompleteness theorem so to say. 

As a matter of fact, it turns out that each of the partial reflection principles is equivalent to ever stronger consistency notions. To make this precise, let us introduce the following definition where we abbreviate $\neg \Box_T \neg \varphi$ by $\Diamond_T \varphi$. 

\begin{definition}
A theory $\T$ is called $n$-consistent if $\T$ together with all true arithmetical formulas of complexity $\Pi_n$ is consistent, that is, if
$\forall \vec x \ \big ( \mathrm{True}_{\Pi_{n}}(\vec x) \rightarrow \Diamond \mathrm{True}_{\Pi_{n}}(\dot{\vec x})\big ) $ holds. We abbreviate this formula by $\la n \ra_{\T}\top$.
\end{definition}

One can easily see that the partial reflection schema is finitely axiomatizable in terms of the partial truth predicate and hence the following holds:

\begin{proposition}\label{RFNnCon}

$\EA \vdash \T + \RFN_{\Sigma_{n}}(\T + \varphi) \equiv  \T + \la n  \ra_{\T} \varphi$. 
\end{proposition}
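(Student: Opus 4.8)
The plan is to exploit the duality between $\Sigma_n$-reflection and $\Pi_n$-consistency, passing through a single ``universal'' instance supplied by the partial truth predicates. Writing $S := \T + \varphi$, I would first reduce the whole schema $\RFN_{\Sigma_n}(S)$ to one sentence. Fix the universal partial truth predicate $\mathrm{True}_{\Sigma_n}(y)$, a $\Sigma_n$-formula, together with the elementary substitution function that produces, from a code of a $\Sigma_n$-formula $\psi(\vec x)$ and numbers $\vec x$, the code of the sentence $\psi(\overline{\vec x})$, so that $\EA$ proves $\mathrm{True}_{\Sigma_n}$ to agree with each $\Sigma_n$-formula on the relevant codes. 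Reflection for the single $\Sigma_n$-formula $\mathrm{True}_{\Sigma_n}(y)$, namely
\[
\forall y\,\big(\Box_S\,\mathrm{True}_{\Sigma_n}(\dot y)\to\mathrm{True}_{\Sigma_n}(y)\big),
\]
is itself one instance of $\RFN_{\Sigma_n}(S)$, and conversely it yields every instance $\forall\vec x\,(\Box_S\,\psi(\dot{\vec x})\to\psi(\vec x))$ upon instantiating $y$ with the code of $\psi(\overline{\vec x})$ and using the defining equivalence of $\mathrm{True}_{\Sigma_n}$ both outside and, provably, under the box. Hence $\EA$ proves $\RFN_{\Sigma_n}(S)$ equivalent to the displayed sentence.

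Next I would take the contrapositive and dualize the truth predicate. Since $\neg\mathrm{True}_{\Sigma_n}$ is, provably in $\EA$, a $\Pi_n$ truth predicate --- writing $\bar y$ for the elementary function sending a sentence code to the code of its negation, $\EA\vdash\neg\mathrm{True}_{\Sigma_n}(y)\leftrightarrow\mathrm{True}_{\Pi_n}(\bar y)$ --- the displayed sentence turns into
\[
\forall x\,\big(\mathrm{True}_{\Pi_n}(x)\to\neg\Box_S\,\neg\mathrm{True}_{\Pi_n}(\dot x)\big),
\]
that is, $\forall x\,(\mathrm{True}_{\Pi_n}(x)\to\Diamond_S\,\mathrm{True}_{\Pi_n}(\dot x))$, which is exactly $\la n\ra_S\top$. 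At this point I have shown $\EA\vdash\RFN_{\Sigma_n}(S)\equiv\la n\ra_S\top$.

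Finally I would rewrite the provability predicate of $S=\T+\varphi$ using the deduction theorem inside the box, $\Box_{\T+\varphi}\theta=\Box_\T(\varphi\to\theta)$, in order to identify $\la n\ra_{\T+\varphi}\top$ with $\la n\ra_\T\varphi$, both unwinding under the quantifier to $\neg\Box_\T((\varphi\wedge\mathrm{True}_{\Pi_n}(\dot x))\to\bot)$. This delivers the claim $\EA\vdash\T+\RFN_{\Sigma_n}(\T+\varphi)\equiv\T+\la n\ra_\T\varphi$, since $\EA$ proves the two added principles equivalent and therefore proves the two extensions of $\T$ to coincide.

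The routine bookkeeping --- universality of the truth predicate with parameters, and the deduction-theorem rewriting --- is standard. The main obstacle I expect is the commutation of substitution with the box in the second half of the reduction step: one must verify that $\EA$ proves $\Box_S$ to respect the defining equivalence of $\mathrm{True}_{\Sigma_n}$, which rests on the $\EA$-provability of that equivalence (so that it is itself boxable) together with provable $\Sigma_n$-completeness for moving numerals under $\Box_S$. A secondary point to check is the passage between ``$\T$ plus every individual true $\Pi_n$ sentence is consistent'' and ``$\T$ plus all true $\Pi_n$ sentences is consistent'', which uses $\EA$-provable closure of $\Pi_n$ under conjunction and the finiteness of proofs.
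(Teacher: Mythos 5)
Your proposal is correct and follows essentially the same route as the paper's proof: compress the schema $\RFN_{\Sigma_n}(\T+\varphi)$ into the single universal instance via the partial truth predicate, observe that its contrapositive is exactly the arithmetization of $n$-consistency $\la n\ra_{\T+\varphi}\top$, and finish with the formalized deduction theorem to pass to $\la n\ra_{\T}\varphi$. You merely spell out the bookkeeping (commuting the Tarski biconditional under the box, dualizing $\Sigma_n$/$\Pi_n$) that the paper leaves implicit.
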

\begin{proof}

Using the truth predicate the reflection schema $\RFN_{\Sigma_{n+1}}(\T)$ can be expressed  as the formula $\forall \vec x \big (  \square \mathrm{True}_{\Sigma_{n}}(\dot{\vec x}) \rightarrow \mathrm{True}_{\Sigma_{n}}(\vec x)\big )$, which is just the contraposition of arithmetization of $n$-consistency.  This concludes the proof, since $ \EA \vdash \la n \ra_{\T} \varphi \leftrightarrow \la n  \ra_{\T + \varphi} \top$, by the formalized deduction theorem.
\end{proof}

We further mention that reflection principles are related to various other branches of mathematical logic. For example, it has been proven that the reflection principle $\RFN_{\Sigma_{n+1}}(\ea)$ is (provably over some rather weak theory) equivalent to $\isig{n+1}$. Here, \isig{n+1} is as \ea only that we now allow for induction formulas for any $\Sigma_{n+1}$ formula. The point that we wish to make here is that reflection principles are natural and have various applications.

Note that our formulation of reflection is given in terms of axioms. One can also formulate a rule-based version of reflection. The rule-based version of reflection has better proof-theoretical and computational properties which motivates their usage. 

In formulating the corresponding reflection rule, one has to be a bit careful. For example, it is easy to see that the rule $\frac{\Box_T\varphi}{\varphi}$ for $\varphi$ a \emph{sentence} is simply admissible as a rule for theories that only prove correct $\Sigma_1$ sentences. On the other hand, if we allow parameters, adding to a theory $T$ the rule $\frac{\forall \vec x \square_{\T} \varphi (\dot {\vec x})}{\forall \vec x \varphi(\vec x)}$ turns out to be equivalent to adding to $T$ the full reflection principle $\RFN(\T)$ for all formulas with parameters (see Beklemishev \cite{Beklemishev2005_ReflectionPrinciples}, p. 14 for the proof of this fact).

The better behaved reflection rule turns out to be the one that is obtained by contraposing a certain form of the reflection axiom. This motivates the following definition.

\begin{definition} Given an elementary presentable theory \T and $m, n < \omega$, the $n$-reflection rule over \T is: 
\begin{prooftree}
\AxiomC{$\varphi$}
\LeftLabel{$\RR^n(\T)$:}
\UnaryInfC{ $ \la n \ra_{\T} \varphi$}
\end{prooftree}
Then $\Pi_m\mhyphen\RR^n(\T)$ is the $n$-reflection rule with $\varphi(\vec{x}) \in \Pi_{m}$.
\end{definition}

Although the different versions of this rule look very technical and artificial, various fragments of arithmetic can naturally be described by them. For example, it is known that Primitive Recursive Arithmetic is equivalent to $\ea + \Pi_2\mhyphen\RR^n(\ea)$.
Now that we have given the definition of the reflection rule, we can finally state the theorem called the \emph{Reduction Property} which is the main topic and starting point of this paper. It is easy to see that the reflection axiom is stronger than the corresponding principle. The reduction property tells us that it is not too much stronger.

\begin{theorem}[Reduction property] \label{ReductionFull} Let \T be an elementary presented theory containing \EA, and let
\U be any $\Pi_{n+2}$-extension of \EA. Then $\U +\RFN_{\Sigma_{n+1}}(\T)$ is $\Pi_ {n+1}$-conservative over $\U + \Pi_{n+1}\mhyphen\RR^n(\T)$. 
\end{theorem}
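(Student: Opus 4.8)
The plan is to prove the nontrivial (conservation) direction, the other inclusion being easy. For the easy inclusion, note that for $\psi\in\Sigma_n$ and $\varphi\in\Pi_{n+1}$ we have $\Box_{\T+\varphi}\psi\leftrightarrow\Box_{\T}(\varphi\to\psi)$ with $\varphi\to\psi\in\Sigma_{n+1}$, so a single instance of $\RFN_{\Sigma_{n+1}}(\T)$ together with $\varphi$ yields $\RFN_{\Sigma_n}(\T+\varphi)$, i.e.\ $\la n\ra_{\T}\varphi$ by Proposition~\ref{RFNnCon}. Hence every application of $\Pi_{n+1}\mhyphen\RR^n(\T)$ is admissible over $\U+\RFN_{\Sigma_{n+1}}(\T)$, so the rule-closure is contained in the axiomatic extension and the two theories will share all their $\Pi_{n+1}$-theorems once conservation is established.

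First I would collapse the schema to a single axiom. Fix $\pi\in\Pi_{n+1}$ with $\U+\RFN_{\Sigma_{n+1}}(\T)\vdash\pi$. By compactness and the universal $\Sigma_{n+1}$-truth predicate, the finitely many instances used reduce to the single sentence $R:=\forall\vec x\,(\Box_{\T}\mathrm{True}_{\Sigma_{n+1}}(\dot{\vec x})\to\mathrm{True}_{\Sigma_{n+1}}(\vec x))$, which is $\Pi_{n+2}$ and, by the contraposition used in Proposition~\ref{RFNnCon}, is $\EA$-equivalent to the $(n{+}1)$-consistency statement $\la n{+}1\ra_{\T}\top$. The deduction theorem gives $\U\vdash R\to\pi$, so the whole problem becomes the implication $\U+\la n{+}1\ra_{\T}\top\vdash\pi\Rightarrow\U+\Pi_{n+1}\mhyphen\RR^n(\T)\vdash\pi$. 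Next I would identify the rule-closure with iterated $n$-consistency: since $\la n\ra_{\T}\chi$ is again $\Pi_{n+1}$ whenever $\chi$ is, iterating the rule from $\U\vdash\top$ produces exactly the sentences $\la n\ra_{\T}^{\,k}\top$ ($k<\omega$), using $\la n\ra_{\T}\chi\leftrightarrow\la n\ra_{\T+\chi}\top$ at each step. Thus $\U+\Pi_{n+1}\mhyphen\RR^n(\T)\vdash\pi$ as soon as $\U+\la n\ra_{\T}^{\,k}\top\vdash\pi$ for some $k$, and the theorem reduces to the core conservation statement
\[ \U+\la n{+}1\ra_{\T}\top\vdash\pi\ \Longrightarrow\ \exists k<\omega\ \ \U+\la n\ra_{\T}^{\,k}\top\vdash\pi. \]

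The easy half of this core statement is the provable monotonicity $\EA\vdash\la n{+}1\ra_{\T}\top\to\la n\ra_{\T}^{\,k}\top$, valid for each fixed $k$, which re-proves the inclusion noted above. The conservation half is the genuine content, and its engine is \emph{bounded} $\Sigma_{n+1}$-reflection: the key lemma is that finitely iterated $n$-consistency proves $\Sigma_{n+1}$-reflection for $\T$-proofs of bounded length, i.e.\ $\EA+\la n\ra_{\T}^{\,k}\top$ proves $\forall p{\le}\overline m\,\forall\vec x\,(\mathrm{Proof}_{\T}(p,\mathrm{True}_{\Sigma_{n+1}}(\dot{\vec x}))\to\mathrm{True}_{\Sigma_{n+1}}(\vec x))$ for a suitable $k=k(m)$, established by induction on the iteration depth together with provable $\Sigma_{n+1}$-completeness of $\T$ over $\EA$. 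The hypothesis that $\U$ is a $\Pi_{n+2}$-extension of $\EA$ is exactly what keeps this argument within range: the finitely many $\U$-axioms consumed in the derivation and the hypothesis $\neg\pi\in\Sigma_{n+1}$ all sit at the $\Sigma_{n+1}/\Pi_{n+2}$ level on which bounded $\Sigma_{n+1}$-reflection operates, so that the single instance $R$ (itself $\Pi_{n+2}$) and the ambient theory $\U$ interact cleanly.

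The hard part, and the step I expect to be the main obstacle, is precisely the passage from bounded $\Sigma_{n+1}$-reflection to the \emph{finite} iteration number $k$ in the core statement. The difficulty is that a naive length bound does not suffice: the witnessing $\T$-proof promised by $\neg R$ is not controlled by the length of the fixed $\U$-derivation of $R\to\pi$, so the correct $k$ must be produced by a reflexive (self-referential) induction that internalizes the whole conservation claim rather than by a direct counting of proof steps. I would therefore set up the argument so that $\U+\la n{+}1\ra_{\T}\top$ proves its own $\Pi_{n+1}$-conservativity over the family $\{\la n\ra_{\T}^{\,k}\top\}$ relative to $\U$, and close the loop by reflexive induction; verifying the exact complexity matching that makes $\U$ fall under the $\Sigma_{n+1}$-horizon of $n$-consistency, and the effective dependence $k=k(m)$, is where essentially all the work lies. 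Everything else is bookkeeping with truth predicates, the deduction theorem, and Proposition~\ref{RFNnCon}.
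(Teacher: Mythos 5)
There is a genuine gap: the heart of the theorem is never proved. Your preparatory steps are fine --- the admissibility of $\Pi_{n+1}\mhyphen\RR^n(\T)$ over $\U+\RFN_{\Sigma_{n+1}}(\T)$, the collapse of the schema to the single sentence $\la n{+}1\ra_{\T}\top$ via Proposition~\ref{RFNnCon}, and the reduction to the core claim that $\U+\la n{+}1\ra_{\T}\top\vdash\pi$ implies $\U+\la n\ra_{\T}^{k}\top\vdash\pi$ for some finite $k$. But that core claim is just a restatement of the Reduction Property itself (compare the displayed equivalence in the abstract and Corollary~\ref{theorem:ReductionPropertyArith}), and at exactly this point the proposal stops: you yourself label the production of the finite $k$ as ``the main obstacle'' and ``where essentially all the work lies,'' offering only the hope that a reflexive induction internalizing the conservation claim will close the loop. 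No such argument is given, and the auxiliary lemma you lean on is not the right one: $\Sigma_{n+1}$-reflection for $\T$-proofs below a \emph{standard} numeral $\overline m$ does not follow from iterated $n$-consistency by an appeal to ``provable $\Sigma_{n+1}$-completeness'' (that principle gives $\sigma\to[n]_{\T}\sigma$, not the soundness of given $\T$-derivations of $\Sigma_{n+1}$ formulas; already for one fixed proof the passage from $\square_{\T}\sigma$ to $\sigma$ is an instance of the conservation being proved). And, as you note yourself, the witnessing $\T$-proof extracted from the failure of $(n{+}1)$-consistency is not standardly bounded anyway, so even a correct bounded-reflection lemma would not yield $k=k(m)$ without further work. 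The proposed engine thus presupposes the missing step rather than supplying it; as it stands the argument reduces the theorem to itself.

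For comparison, the paper simply cites Beklemishev's cut-elimination proof, whose mechanism is displayed in the appendix for the generalization (Theorem~\ref{JNReduction}): one takes a cut-free derivation of the sequent $\neg U,\neg\RFN_{\Sigma_{n+1}}(\T),\Pi$ and shows, by induction on the height of that derivation, that $\U+\Pi_{n+1}\mhyphen\RR^n(\T)$ proves the disjunction of the surviving low-complexity formulas, converting each $\wedge$-introduction that assembles a reflection instance into one application of the rule together with provable completeness and L\"ob's conditions. The height of the cut-free derivation is what bounds the number of rule applications (your $k$), and the superexponential cost of cut-elimination is exactly why the result is only claimed to be verifiable in $\EA^+$. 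To complete your proposal you would either have to carry out that cut-elimination induction or substitute a genuinely different (for instance model-theoretic) proof of the core conservation statement.
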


\begin{proof}

See \cite{Beklemishev2005_ReflectionPrinciples}: use cut-elimination to reduce $\RFN_{\Sigma_{n+1}}(\T)$ to $\Pi_ {n+1}\mhyphen\RR^n(\T)$. (Hence this fact is provable in $\EA^+$. Here $\EA^+$ is as \EA together with an axiom that states the totality of the super-exponential function.) 
\end{proof}

The technicality of the theorem draws away the attention from its strength. Let us just mention that for $n=1$ and $T=U = \EA$, the Reduction Property gives us that \isig{1} is $\Pi_2$ conservative over \pra (Parson's Theorem). Furthermore, the Reduction Property is the central ingredient to perform $\Pi_1$-ordinal analysis for \pa and its kin based on provability logics (see \cite{Beklemishev2004}). 

In this paper we will do the following. In Section \ref{section:provLogics} we will revisit some definitions and results from polymodal provability logic. These logics are used in Section \ref{section:RPvariations} where we give prove our first variations on the reduction property. First it is observed that significant simplifications of the reduction property arise when $U=T$. Next, over this simplification we prove various generalizations. Most notably, we shall see how various different kind of reflection axioms and rules can be related to each other. 

In Section \ref{section:algebraicTurn} we shall also extend the Reduction Property to transfinite reflection principles. Since there is no satisfactory (hyper) arithmetical interpretation around yet, this generalization shall hence be performed in a purely algebraic setting.

\section{Polymodal provability logic}\label{section:provLogics}

Many of our results are stated using formalized provability notions like $[n]_T\varphi$. The structural behavior of those predicates is described by what is called \emph{polymodal provability logic}. This is a propositional modal logic with for each given ordinal $\alpha$ a modality $[\alpha]$. 

\begin{definition} 
For $\Lambda$ an ordinal or the class of all ordinals, the logic $\mathsf{GLP}_\Lambda$ is given by the following axioms:
\begin{enumerate}
\item all propositional tautologies{,}
\item Distributivity:
$[\xi](\varphi \to \psi) \to ([\xi]\varphi \to [\xi]\psi)$ for all $\xi<\Lambda${,}

\item Transitivity:
{$[\xi] \varphi \to [\xi] [\xi]\varphi$ for all $\xi<\Lambda$}{,}

\item L\"ob:
{$[\xi]([\xi]\varphi \to \varphi)\to[\xi]\varphi$ for all $\xi<\Lambda$}{,}

\item Negative introspection:
$\la\zeta\ra\varphi\to\la\xi\ra\varphi$ for $\xi<\zeta<\Lambda${,}

\item Monotonicity:
$\la\xi\ra\varphi\to [\zeta]\la\xi\ra\varphi$ for $\xi<\zeta<\Lambda$.
\end{enumerate}
The rules are Modes Ponens and Necessitation for each modality: $\displaystyle \frac{\varphi}{[\xi]\varphi}$.
\end{definition}

The following proposition is often used without explicit mention throughout the paper.

\begin{proposition}\label{PushBiggerDiamondOut}
Let $\Lambda$ be an ordinal, $\gamma > \zeta \in \Lambda$ and $\varphi, \psi$ are $\glp_\Lambda$ formulae, then: \[ \vdash_{\glp_\Lambda} \la \gamma \ra ( \varphi \wedge \la \zeta \ra \psi) \leftrightarrow (\la \gamma \ra \varphi \wedge \la \zeta \ra \psi ) \] 
\end{proposition}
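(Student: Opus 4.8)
The plan is to prove the two implications separately, since they rest on genuinely different axioms of $\glp_\Lambda$. Throughout I would lean on two routine facts about normal modalities. First, each $\la \xi \ra$ is monotone: from $\vdash A \to B$ one derives $\vdash \la \xi \ra A \to \la \xi \ra B$ by Necessitation and Distributivity applied to the contrapositive $\neg B \to \neg A$. Second, the K-style conjunction principle $[\xi]A \wedge \la \xi \ra B \to \la \xi \ra (A \wedge B)$ holds, derivable from Distributivity and Necessitation alone (necessitate the tautology $A \to (\neg(A\wedge B)\to \neg B)$ and contrapose).

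For the direction from right to left, $\la \gamma \ra \varphi \wedge \la \zeta \ra \psi \to \la \gamma \ra (\varphi \wedge \la \zeta \ra \psi)$, I would first apply Monotonicity (axiom~6), which since $\zeta < \gamma$ yields $\la \zeta \ra \psi \to [\gamma]\la \zeta \ra \psi$. Feeding $[\gamma]\la\zeta\ra\psi$ together with the hypothesis $\la\gamma\ra\varphi$ into the conjunction principle above, with $A := \la\zeta\ra\psi$ and $B := \varphi$, delivers exactly $\la\gamma\ra(\varphi \wedge \la\zeta\ra\psi)$.

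For the direction from left to right, the first conjunct $\la\gamma\ra\varphi$ is immediate by monotonicity of $\la\gamma\ra$ applied to the tautology $\varphi \wedge \la\zeta\ra\psi \to \varphi$. The second conjunct is the crux. Again by monotonicity I obtain $\la\gamma\ra(\varphi\wedge\la\zeta\ra\psi) \to \la\gamma\ra\la\zeta\ra\psi$, so it remains to establish the key lemma $\la\gamma\ra\la\zeta\ra\psi \to \la\zeta\ra\psi$ for $\zeta < \gamma$. Here I would first invoke Negative introspection (axiom~5) with the formula $\la\zeta\ra\psi$ to lower the outer index, giving $\la\gamma\ra\la\zeta\ra\psi \to \la\zeta\ra\la\zeta\ra\psi$, and then collapse the nested diamond using Transitivity (axiom~3) in its dual form $\la\zeta\ra\la\zeta\ra\psi \to \la\zeta\ra\psi$ (the contrapositive of $[\zeta]\chi \to [\zeta][\zeta]\chi$).

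The main obstacle, and really the only nonroutine step, is this last lemma: one cannot push $\la\zeta\ra$ straight out of $\la\gamma\ra$ in one move, and in particular Monotonicity alone does not suffice for this direction. The trick is to use Negative introspection to bring the outer modality down to level $\zeta$ \emph{first}, after which Transitivity absorbs the repetition. Everything else is bookkeeping with the normality of the modalities.
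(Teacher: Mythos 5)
Your proof is correct and follows essentially the same route as the paper's: the right-to-left direction via $\la \zeta \ra \psi \to [\gamma]\la \zeta \ra \psi$ plus the K-style conjunction principle, and the left-to-right direction by distributing $\la\gamma\ra$ over the conjunction, lowering the outer modality to $\la\zeta\ra$, and collapsing $\la\zeta\ra\la\zeta\ra$ by (dual) transitivity. The only difference is cosmetic: you cite axiom~5 (which the paper labels ``Negative introspection'') for the lowering step, whereas the paper's text loosely calls it monotonicity; your attribution matches the axiom list as stated.
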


\begin{comment}
\begin{proof}
$(\rightarrow)$ Follows by monotonicity and transitivity and the fact that $ \vdash_{\glp_\Lambda} \la n \ra ( \varphi \wedge \psi ) \rightarrow \la n \ra \varphi \wedge \la n \ra \psi $: 
 \[ \la \gamma \ra ( \varphi \wedge \la \zeta \ra \psi) \Rightarrow \la \gamma \ra \varphi \wedge \la \gamma \ra \la \zeta \ra \psi \Rightarrow \la \gamma \ra \varphi \wedge \la \zeta \ra \la \zeta \ra \psi \Rightarrow \la \gamma \ra \varphi \wedge \la \zeta \ra \psi. \]

$(\leftarrow)$ By axiom $\la \zeta \ra \psi \rightarrow [\gamma] \la \zeta \ra \psi$ we have that $\la \gamma \ra \varphi \wedge \la \zeta \ra \psi \Rightarrow \la  \gamma \ra \varphi \wedge [\gamma] \la \zeta \ra \psi   \Rightarrow \la \gamma \ra (\varphi \wedge \la \zeta \ra \psi ).$
\end{proof}
\end{comment}

At times we shall write $\varphi \vdash_\glp \psi$ instead of $\vdash_\glp \varphi \to \psi$ and shall drop subscripts if the context allows us to. When both $\varphi\vdash \psi$ and $\psi \vdash \varphi$ we will write $\varphi \equiv \psi$. We are interested in a particular subclass of formulae of $\glp_\Lambda$ called worms. They represent iterated consistency statements.

\begin{definition} Given an ordinal $\Lambda$, the set of worms $\Worms^{<\Lambda}$ is inductively defined as follows: \begin{itemize}
  \item $\top \in \Worms$;
    \item $ \la \gamma \ra A \in \Worms^{<\Lambda}$ if $\gamma < \Lambda$ and $A \in \Worms^{<\Lambda}$.
    \end{itemize}
    We write $\Worms^{<\Lambda}_\gamma$ for the collection of worms where all modalities are smaller than $\gamma$. Sometimes we omit $\Lambda$ when the context permits so.
  \end{definition}

For $A$ a worm, we denote by $\alpha\uparrow A$ the worm that arrises by replacing each modality $\la \xi\ra$ by $\la \alpha + \xi\ra$. Likewise, for $A\in \Worms_\alpha$ we denote by $\alpha\downarrow A$ the worm that arrises by replacing each modality $\la \xi\ra$ by $\la -\alpha + \xi\ra$. As usual, for $\alpha \leq \xi$ the result of $-\alpha + \xi$ is defined as the unique ordinal such that $\alpha + (-\alpha + \xi) = \xi$. Sometimes we will omit the modality brackets when writing worms. For example we will write $\zeta A$ instead of $\la \zeta\ra A$.

 Worms in $\Worms_\gamma$ are lineraly ordered (see \cite{Beklemishev2014}) using the following relation:
  \begin{definition}{$(<, <_\gamma)$} $A <_\gamma B \iff B \vdash_{\glp_\Lambda} \la \gamma \ra A.$ For $<_0$ we write $<$.
  \end{definition}

  Worms can be mapped to ordinals with the following isomorphism between $\la \Worms{/}\equiv, <_0\ra$ and $\la {\sf Ord}, <\ra$:

\begin{enumerate}
\item $o(\top) = 0$;

\item $o(A) =  o(b(A)) + \omega^{o(1\downarrow h(A))} + 1$ if $A \not = \top$ and $min \; A = 0$;

\item $o(A) = e^\mu o(\mu \downarrow A)$ if $A \not = \top$ and $\mu = min \; A > 0$.

\end{enumerate}

Here, $e^\mu$ denotes a hyperexponential function defined in \cite{FERNANDEZDUQUE2013785}. Basically, $e^\mu$ is `$\mu$-times iteration' of the function $\alpha \mapsto -1 + \omega^\alpha$ with $e^0$ being the identity. For the purpose of this paper it is not really important to know how this iteration is exactly defined other than $e^{\alpha + \beta} = e^\alpha \circ e^\beta$.

\begin{definition}[$h_\alpha$, $(o_\gamma)$]
For any worm A, we define its $\gamma$-head denoted by $h_\gamma(A)$ as follows: $h_\gamma(\top) := \top$, $h_\gamma(\zeta A) := \zeta h_\gamma(A)$ if $\gamma \leq \zeta$ and  $h_\gamma(\zeta A) := \top$ if $\zeta < \gamma$.  
With this notion of head, we can now define an order function for any ordinal.
\[
o_\gamma (A) = o(\gamma \uparrow h_\gamma(A)).
\]
\end{definition}

The generalized order relation is related to the generalized order function as expected:

\begin{lemma} \label{GammaOrder} For $A,B \subseteq \Worms_\gamma$, $ o_\gamma(A) < o_\gamma(B) \iff A <_\gamma B. $
\end{lemma}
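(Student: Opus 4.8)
The plan is to bootstrap from the case $\gamma=0$, where the statement is nothing but the fact, recorded just above, that $o$ is an order isomorphism between $\la\Worms/{\equiv},<_0\ra$ and $\la{\sf Ord},<\ra$. Since $o$ is such an isomorphism and $o_\gamma(A)$ is $o$ applied to the shift of the $\gamma$-head $h_\gamma(A)$ down to the base level, unfolding the definition turns the target equivalence $o_\gamma(A)<o_\gamma(B)\iff A<_\gamma B$ into a purely $\glp_\Lambda$-theoretic claim about $<_\gamma$. I would factor it through two structural facts: (i) $A<_\gamma B\iff h_\gamma(A)<_\gamma h_\gamma(B)$, i.e.\ $<_\gamma$ is insensitive to everything lying below the $\gamma$-head; and (ii) for worms $C,D$ all of whose modalities are ${\geq}\gamma$, one has $C<_\gamma D\iff \gamma{\downarrow}C<_0\gamma{\downarrow}D$, i.e.\ uniformly lowering all modalities by $\gamma$ is an isomorphism from the worms-with-modalities-${\geq}\gamma$ under $<_\gamma$ onto $\la\Worms,<_0\ra$. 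Chaining (i), (ii) and the isomorphism $o$ then yields Lemma~\ref{GammaOrder}; note that when the worms under comparison already have all modalities ${\geq}\gamma$ (the natural domain of $\gamma{\downarrow}$), the head operation is the identity, (i) is vacuous, and (ii) is the entire content.

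For (ii) the key observation is that the modal principles governing $<_\gamma$ — distributivity, transitivity, L\"ob, negative introspection and monotonicity — refer only to the \emph{relative} order of the modalities involved, never to their absolute values. Hence the order-preserving bijection $\xi\mapsto{-}\gamma+\xi$ transports any $\glp_\Lambda$-derivation witnessing $D\vdash_{\glp_\Lambda}\la\gamma\ra C$ into one witnessing $\gamma{\downarrow}D\vdash_{\glp_\Lambda}\la 0\ra(\gamma{\downarrow}C)$, and conversely. I would make this precise by an induction on the length of the worms, reducing $\la\gamma\ra C$ to the normal form supplied by Proposition~\ref{PushBiggerDiamondOut}.

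The heart of the matter, and the step I expect to fight with, is (i). One is tempted to prove the cleaner statement $\la\gamma\ra A\equiv\la\gamma\ra h_\gamma(A)$, but this is \emph{false} in general: a modality ${<}\gamma$ occurring in $A$ after the head leaves a residual conjunct that need not be absorbed (already $\la1\ra\la2\ra\la0\ra\la3\ra\top\equiv\la1\ra\la2\ra\top\wedge\la0\ra\la3\ra\top$, and the second conjunct is not implied by the first). The correct route is to push every modality ${<}\gamma$ out of the scope of $\la\gamma\ra$ by repeated use of Proposition~\ref{PushBiggerDiamondOut}, writing $\la\gamma\ra A\equiv\la\gamma\ra h_\gamma(A)\wedge R_A$ with $R_A$ collecting the surviving low-modality diamonds, and then to argue that, testing this against $B$ in the same normal form, the residues $R_A,R_B$ cannot change the verdict of the comparison $A<_\gamma B$ — here negative introspection and monotonicity are used to dominate the sub-$\gamma$ residue by the head. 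Establishing this cancellation rigorously is the delicate part; once (i) and (ii) are secured, the lemma follows by the stated chain of equivalences.
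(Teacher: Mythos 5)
Your proposal is correct and follows essentially the same route as the paper: the paper's proof is exactly the chain $o_\gamma(A)<o_\gamma(B)\iff o(\gamma\downarrow A)<o(\gamma\downarrow B)\iff \gamma\downarrow A<_0\gamma\downarrow B\iff A<_\gamma B$, i.e.\ your facts (i) (vacuous here, since $h_\gamma$ is the identity on $\Worms_\gamma$) and (ii) together with the base-level isomorphism $o$. The only difference is that the paper outsources (ii) and the shift-invariance of the orders to the lemmas of Section~4 of the cited Fern\'andez-Duque--Joosten paper, whereas you sketch proofs of them via monotone modality substitutions, which is indeed how they are established there.
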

\begin{comment}
\begin{proof} Applying lemmas from Section 4 of \cite{David}.
 $$ o_\gamma(A) < o_\gamma(B) \iff o(\gamma \downarrow A) > o(\gamma \downarrow B) \iff $$
$$ \gamma \downarrow A > \gamma \downarrow B \iff \gamma \uparrow (\gamma \downarrow B ) <_\gamma  \gamma \uparrow (\gamma \downarrow A ) $$
  $$\iff B <_\gamma A $$
\end{proof}
\end{comment}

As we shall see, various aspects of the Reduction Property can be formulated using formalized provability predicates. Since the structural behaviour of those predicates is given by \glp in a sense specified below, we can use all our \glp reasoning inside arithmetical arguments. 

The relevant link between \glp and arithmetic for this paper is the soundness theorem. Key to this theorem is the notion of an \emph{arithmetical $T$-realization} which is a map $\star : {\sf Prop} \to {\sf Form}$ from propositional variables to formulas in the language of arithmetic which is extended to range over all modal formulas of $\glp_\omega$ by stipulating that $\star$ commutes with boolean connectives like $(\varphi \wedge \psi)^*= \varphi^* \wedge \psi^*$ and moreover, $(\la n \ra \varphi)^*:= \la n \ra_T \varphi^*$.

\begin{theorem}[Arithmetical soundness]
Given an elementary presented theory $T$ and $U$ any arithmetical theory extending \ea, and letting $\star$ range over arithmetical $T$-realizations, we have that
\[
\glp_\omega \vdash \varphi \ \ \ \Longrightarrow \ \ \ \forall \star U \vdash \varphi^*.
\]
\end{theorem}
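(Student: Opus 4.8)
The plan is to argue by induction on the length of a $\glp_\omega$-derivation of $\varphi$, proving the slightly stronger statement that $\ea \vdash \varphi^*$ for every arithmetical $T$-realization $\star$; since $U$ extends $\ea$, the desired conclusion $U \vdash \varphi^*$ then follows \emph{a fortiori}. The universal quantifier over $\star$ is handled uniformly, since the argument never uses anything about a specific realization beyond that $\star$ commutes with the boolean connectives and sends $[\xi]\psi$ to $[\xi]_T \psi^*$ (equivalently $\la\xi\ra\psi$ to $\la\xi\ra_T\psi^*$).

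For the base cases I would check that each axiom schema of $\glp_\omega$, read under $\star$, becomes an $\ea$-provable arithmetical sentence. Propositional tautologies translate to propositional tautologies and are immediate. The distributivity, transitivity and L\"ob schemata translate to the Hilbert--Bernays--L\"ob derivability conditions for the predicate $[\xi]_T$, all of which hold provably in $\ea$ for any elementarily presented $T$: transitivity corresponds to the provable completeness of $[\xi]_T$, and the L\"ob schema to the formalized L\"ob theorem. The negative introspection and monotonicity schemata translate to the $\ea$-provable interaction principles relating the predicates for different indices, namely $\la\zeta\ra_T\psi^* \to \la\xi\ra_T\psi^*$ for $\xi<\zeta$ and $\la\xi\ra_T\psi^* \to [\zeta]_T\la\xi\ra_T\psi^*$ for $\xi<\zeta$, which are exactly the arithmetical facts about the hierarchy of iterated consistency notions established in the cited literature. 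Each of these is a routine, if definition-sensitive, verification that leans on the fact that the partial truth predicates used to define $\la n\ra_T$ have their defining equivalences provable already in $\ea$.

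For the inductive steps I would treat the two rules. Modus ponens is immediate: from $\ea \vdash \varphi^* \to \psi^*$ and $\ea \vdash \varphi^*$ we obtain $\ea \vdash \psi^*$. The interesting case is necessitation, $\frac{\psi}{[\xi]\psi}$. Here the inductive hypothesis gives $\ea \vdash \psi^*$, and I need $\ea \vdash [\xi]_T \psi^*$. The chain that makes this go through --- and the one place where the hypothesis $\ea \subseteq T$ is used --- is the following: from $\ea \vdash \psi^*$ and $\ea \subseteq T$ we get $T \vdash \psi^*$; hence the sentence $\Box_T\psi^*$ is a true $\Sigma_1$ statement, so by $\Sigma_1$-completeness of $\ea$ we obtain $\ea \vdash \Box_T\psi^*$; and since $[0]_T=\Box_T$, the box form of monotonicity gives the $\ea$-provable implication $\Box_T\psi^* \to [\xi]_T\psi^*$, whence $\ea \vdash [\xi]_T\psi^*$ as required.

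The main obstacle I anticipate is precisely this necessitation step, because the modality is interpreted as $T$-provability while the realizations are being proved in the base theory rather than in $T$ itself; the resolution hinges on observing that $\ea$-provability of a realization lifts to $T$-provability exactly because $T$ contains $\ea$, after which $\Sigma_1$-completeness and monotonicity close the gap. Beyond this, the only real labour is assembling the collection of $\ea$-provable properties of the predicates $[\xi]_T$ that mirror the $\glp_\omega$ axioms; these are standard but must be matched carefully against the precise definitions of the $n$-provability and $n$-consistency predicates given earlier.
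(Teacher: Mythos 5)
The paper states this theorem without proof, treating it as the standard arithmetical soundness of $\glp_\omega$ due to Japaridze and Beklemishev; your induction on the length of the derivation --- checking each axiom schema against the $\ea$-provable derivability conditions for the predicates $[\xi]_T$ (formalized L\"ob, provable $\Sigma_{\xi+1}$-completeness for transitivity, and the two interaction principles for the hierarchy) and closing the necessitation step via $\Sigma_1$-completeness followed by the monotonicity implication $\Box_T\psi^*\to[\xi]_T\psi^*$ --- is exactly the canonical argument and is correct. The one point worth flagging is that your necessitation step, and likewise the L\"ob and completeness conditions for $[\xi]_T$, genuinely require $T$ to contain $\ea$; the paper's statement of the theorem leaves this hypothesis implicit, but it is assumed of $T$ throughout the surrounding text, so your explicit use of it is appropriate rather than a gap.
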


%\input{Prerequisites.tex}

%\section{snippets that may be included}
%
%Note that the reflection rule with parameters is reducible to non-parametric (local) reflection rule. Assume that we have $ \varphi(\vec x )$, then by generalization we get $\forall \vec x \varphi (\vec x)$ and by $\RR^n$ we get $\la n \ra \forall \vec x \varphi (\vec x)$ and $\forall \vec x \la n \ra  \varphi (\vec x)$  by L\"ob's condition and thus  $\la n \ra  \varphi (\vec x)$. This is not the case for the ``contraposition'' of the rule, since $\frac{\square_{\T} \varphi}{\varphi}$ is admissible in $\T$,  Clearly, $\frac{[n]_{\T} \varphi}{\varphi} \to \frac{\square_{\T} \varphi}{\varphi}$, same for the parametric version.
%

\section{Variations of the reduction property}\label{section:RPvariations}

In this section we shall give a genuine generalization of the full Reduction Property. But we will first see that when taking $U =T$ in the reduction property, that this allows for substantial simplifications. It is of this simplification that we shall prove various algebraic generalizations in the next section.

\subsection{A simplification and iterated consistency}
%First we state the reduction property in terms of $\Pi_n$-conservativity of arithmetical theories and prove its simple generalization. Then we move to the algebraic setting.
% For $n > 0$ the scheme $\RFN_{\Sigma_n}(\T)$ is equivalent to $\RFN_{\Pi_{n+1}}(\T)$ and $\la n \ra_{\T} \top$, $n$-consistency of \T. 

It is convenient to axiomatize the Reduction property using Beklemishev's $Q$-formulae.

\begin{definition}[Iterated $n$-consistency]
Let \T be any arithmetic theory containing \EA and $\varphi$ be any formula in the language of \T. Define:
\[ Q_{n}^{0}(\varphi) = \top \]
 \[Q_{n}^{k+1}(\varphi)= \la n \ra_{\T} (\varphi \wedge Q_{n}^{k}(\varphi)).\]
\end{definition}

For $\varphi = \top$ these $Q$ formulas just reduce to iterated consistency.

\begin{observation} $Q_{n}^{k}(\top) \leftrightarrow \la n \ra_{\T}^{k} \top.$
 \end{observation}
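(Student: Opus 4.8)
The plan is to prove the equivalence by a routine induction on $k$, the only ingredients being that $\top$ is a unit for conjunction and that the modality $\la n\ra_{\T}$ respects provable equivalence. Throughout I read $\leftrightarrow$ as provability in the underlying normal modal logic (equivalently, as $\glp_\omega$-provability), so that all reasoning is purely structural and the arithmetical soundness theorem then transfers it to the theory under any $T$-realization.

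First I would record the basic simplification at the level of the defining recursion. Since $\top \wedge \psi \equiv \psi$ holds propositionally, the clause $Q_{n}^{k+1}(\top) = \la n \ra_{\T}\big(\top \wedge Q_{n}^{k}(\top)\big)$ collapses to $Q_{n}^{k+1}(\top) \equiv \la n \ra_{\T}\,Q_{n}^{k}(\top)$. This already exhibits exactly the shape of an iterated diamond, so the whole claim reduces to checking that the number of nested modalities matches on both sides.

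For the base case $k=0$ both sides are literally $\top$, since $Q_{n}^{0}(\top)=\top$ by definition and $\la n \ra_{\T}^{0}\top = \top$. For the inductive step, assuming $Q_{n}^{k}(\top)\equiv \la n \ra_{\T}^{k}\top$, I would invoke the congruence of $\la n \ra_{\T}$ under provable equivalence: from $\vdash \varphi \leftrightarrow \psi$ one gets $\vdash \la n \ra_{\T}\varphi \leftrightarrow \la n \ra_{\T}\psi$, which follows from Necessitation together with Distributivity (regularity of the modal logic). Chaining this with the simplification yields $Q_{n}^{k+1}(\top)\equiv \la n \ra_{\T}\,Q_{n}^{k}(\top)\equiv \la n \ra_{\T}\big(\la n \ra_{\T}^{k}\top\big)=\la n \ra_{\T}^{k+1}\top$, closing the induction. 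There is essentially no obstacle; the only point needing the slightest care is to apply this congruence in the inductive step rather than treating the two sides as syntactically identical, since for $k\geq 1$ the $Q$-formula and the iterated diamond are merely provably equivalent, not equal as strings.
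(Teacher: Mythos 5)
Your proof is correct: the paper states this as an unproved observation, and your induction on $k$, using that $\top$ is a unit for conjunction together with the congruence of $\la n \ra_{\T}$ under provable equivalence (necessitation plus distributivity), is exactly the routine argument the authors are implicitly relying on. Nothing further is needed.
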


The main ingredients of the following proposition were almost formulated as such in \cite{Joosten2013}. The proposition tells us that we can simplify the reduction property significantly in case $T = U$.

%@article{Joosten:2013:AnalysisBeyondFO,
%author={Joosten, J. J.},
%title={{$\Pi^0_1$}-ordinal analysis beyond first-order arithmetic},
%journal={Mathematical Communications},
%volume={18},
%issue={1},
%pages={109-121},
%year={2013},
%}

\begin{proposition}\label{ReductionRuleEquiv}

\begin{minipage}[t]{\linewidth}
\begin{enumerate}

\item $\T + \RR^n(\T + \varphi) \equiv \T + \{ Q^k_n(\varphi) \mid k<\omega \}$ 
\item $\T + \Pi_{n+1}\mhyphen \RR^n(\T + \varphi) \equiv \T + \{ Q^{k}_n (\varphi) \mid k<\omega \}$ 
\item  $ \T + \RR^n(\T + \varphi) \equiv \T +  \Pi_{n+1}\mhyphen \RR^n(\T + \varphi) $.

\end{enumerate}

\end{minipage}

\end{proposition}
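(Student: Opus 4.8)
The plan is to route all three equivalences through the single intermediate theory $\T + \{Q_n^k(\varphi)\mid k<\omega\}$, exploiting two preliminary facts. First, by the formalized deduction theorem already invoked in Proposition~\ref{RFNnCon} we have $\EA\vdash \la n\ra_{\T+\varphi}\psi \leftrightarrow \la n\ra_\T(\varphi\wedge\psi)$, so the conclusion of an application of the rule $\RR^n(\T+\varphi)$ to a premise $\psi$ is provably just $\la n\ra_\T(\varphi\wedge\psi)$; in particular, applying the rule to $Q_n^k(\varphi)$ yields exactly $Q_n^{k+1}(\varphi)$. Second, I record two structural properties of the $Q$-formulae: they are monotone, $\vdash_\T Q_n^{k+1}(\varphi)\to Q_n^k(\varphi)$ (by induction on $k$, applying $\la n\ra_\T$-monotonicity to $\varphi\wedge Q_n^{k}(\varphi)\to\varphi\wedge Q_n^{k-1}(\varphi)$), so that $\bigwedge_{k\leq m}Q_n^k(\varphi)\equiv Q_n^m(\varphi)$; and each $Q_n^k(\varphi)$ lies in $\Pi_{n+1}$, since $Q_n^0(\varphi)=\top$ and every $\la n\ra_\T\theta$ is $\Pi_{n+1}$ irrespective of the complexity of $\theta$.

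For the inclusions from the $Q$-theory into the rule-theories, I argue by induction on $k$ that both $\T+\RR^n(\T+\varphi)$ and $\T+\Pi_{n+1}\mhyphen\RR^n(\T+\varphi)$ prove every $Q_n^k(\varphi)$. The base case is $Q_n^0(\varphi)=\top$, and the step applies the rule to $Q_n^k(\varphi)$ to obtain $\la n\ra_\T(\varphi\wedge Q_n^k(\varphi))=Q_n^{k+1}(\varphi)$. For the restricted rule this step is legitimate precisely because $Q_n^k(\varphi)\in\Pi_{n+1}$, which is where the complexity computation is essential. This gives the right-to-left direction of items~(1) and~(2). Item~(3) then follows for free: once (1) and (2) are established, $\T+\RR^n(\T+\varphi)\equiv \T+\{Q_n^k(\varphi)\mid k<\omega\}\equiv \T+\Pi_{n+1}\mhyphen\RR^n(\T+\varphi)$.

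The remaining, and main, direction is to show that $\T+\{Q_n^k(\varphi)\mid k<\omega\}$ is already closed under the reflection rule, which yields the left-to-right inclusion of~(1) (and hence of~(2), the $\Pi_{n+1}$-restricted rule being weaker). So suppose $\T+\{Q_n^k(\varphi)\}\vdash\psi$. By compactness and the monotonicity of the $Q$-formulae this collapses to $\T\vdash Q_n^m(\varphi)\to\psi$ for a single $m$, whence $\T\vdash (\varphi\wedge Q_n^m(\varphi))\to(\varphi\wedge\psi)$. The crux is to push this implication under the modality: from an actual $\T$-proof we get $\EA\vdash\square_\T\big((\varphi\wedge Q_n^m(\varphi))\to(\varphi\wedge\psi)\big)$ by provable $\Sigma_1$-completeness, and then arithmetical soundness of $\glp_\omega$ (ordinary provability implies $n$-provability, $\square_\T\theta\to[n]_\T\theta$, together with distributivity) gives $\T\vdash \la n\ra_\T(\varphi\wedge Q_n^m(\varphi))\to\la n\ra_\T(\varphi\wedge\psi)$, i.e.\ $\T\vdash Q_n^{m+1}(\varphi)\to\la n\ra_{\T+\varphi}\psi$. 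Since $Q_n^{m+1}(\varphi)$ is an axiom of the $Q$-theory, this theory proves $\la n\ra_{\T+\varphi}\psi$, the conclusion of the rule. I expect this closure argument to be the delicate step, since it is where the formalized deduction theorem, the monotonicity of the $Q$-formulae, and the modal monotonicity of $\la n\ra_\T$ (justified through $\glp$-soundness rather than by direct arithmetical manipulation) must be combined correctly.
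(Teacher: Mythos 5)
Your proposal is correct and follows essentially the same route as the paper: the inclusion of the $Q$-theory into the rule-theories goes by induction on $k$ using that each $Q_n^k(\varphi)\in\Pi_{n+1}$, the converse goes by showing the $Q$-theory is closed under the rule (the paper phrases this as induction on the number of rule applications) via the deduction theorem, necessitation and distributivity under $\la n\ra_\T$, and item~(3) is read off from (1) and (2). Your explicit preliminary lemmas (monotonicity $Q_n^{k+1}(\varphi)\to Q_n^k(\varphi)$ and the complexity computation) only make precise steps the paper leaves implicit.
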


\begin{proof}

$(\supseteq, 1\mhyphen2)$ For this direction, Statement 2 implies 1, so it is sufficient to show that for all $k < \omega$, $T + \Pi_{n+1}\mhyphen \RR^n(\T + \varphi) \vdash Q_n^k(\varphi)$. We show this by induction on $k$. Obviously, this implies that $ T + \RR^n(\T + \varphi) \vdash Q_n^k(\varphi)$. For $k = 0$ this is trivial. For the inductive case, assume that $T + \Pi_{n+1}\mhyphen\RR^n(\T + \varphi) \vdash Q_n^k(\varphi)$. Since $Q_n^k(\varphi) \in \Pi_{n+1}$,  by one application of the $\Pi_{n+1}\mhyphen\RR^n(\T + \varphi)$ rule we get $T + \Pi_{n+1}\mhyphen\RR^n(\T + \varphi)  \vdash \la n \ra (\varphi \wedge Q_n^k(\varphi))$.

$(\subseteq, 1\mhyphen2)$ For this direction, Statement 1 implies 2, so it is sufficient to prove the first statement. Assume that $T + \RR^n(\T + \varphi) \vdash \chi$. We show that $\T + \{ Q^k_n(\varphi) \mid k<\omega \} \vdash \chi$ by induction on $l$, the number of applications of the $\RR^n(\T)$ rule. For $l = 0$, we have that $\T \vdash \chi$ and thus $\T + \{ Q^k_n(\varphi) \mid k<\omega \} \vdash \chi$. Now assume that $T + \RR^n(\T + \varphi) \vdash \chi$ by $l + 1$ applications of $\RR^n$ rule and wlog that $\chi = \la n \ra ( \varphi \wedge \chi_1 )$ and  $T + \RR^n(\T + \varphi) \vdash \chi_1$. Then by the induction hypothesis, $\T + Q^k_n(\varphi) \vdash \chi_1$ for some $k$. By the deduction theorem we get $\T  \vdash Q^k_n(\varphi) \to \chi_1$ and by necessitation  $\T  \vdash [n] \big ( Q^k_n(\varphi) \to \chi_1 \big )$. By definition, $\T + \{ Q^k_n(\varphi) \mid k<\omega \} \vdash \la n \ra \big ( \varphi \wedge Q_n^k(\varphi)\big )$ and thus $\T + \{ Q^k_n(\varphi) \mid k<\omega \} \vdash \la n \ra (\varphi \wedge \chi_1)$. 

3. Statement 3 obviously follows from 1 and 2.

   %$\T + \RR^n(\T + \varphi)  \vdash \la n \ra (\varphi \wedge Q^k_n(\varphi) \to \chi_1)$ by one application of $\RR^n$.  By necessitation, $\T + \{ Q^k_n(\varphi) \mid k<\omega \} \vdash [n] Q^k_n(\varphi)$ and thus $\T + \{ Q^k_n(\varphi) \mid k<\omega \} \vdash \la n \ra (\varphi \wedge Q^k_n(\varphi) \wedge Q^k_n(\varphi) \to \chi_1) \vdash \la n \ra (\varphi \wedge \chi_1) = \chi$. This concludes the proof.

\end{proof}

%($\subseteq$) We show that for all $k < \omega$, $T + \Pi_{n+1} \RR^n(\T + \varphi) \vdash Q_n^k(\varphi)$, by induction on $k$. For $k = 0$ trivial. Assume $T + \Pi_{n+1}\RR^n(\T + \varphi)  \vdash Q_n^k(\varphi)$ and since $ Q_n^k(\varphi) \in \Pi_{n+1}$, by one application of the $\Pi_{n+1}$-metareflection rule $T + \Pi_{n+1}-\RR^n(\T + \varphi)  \vdash \la n \ra (\varphi \wedge Q_n^k(\varphi))$.

%($\supseteq$) Assume that for some $k$, $\T + Q^k_n(\varphi) \vdash \psi$. By definition, $\T + Q^{k+1}_{n}(\varphi) \vdash \la n \ra_{\T}( \varphi \wedge Q^k_n(\varphi) )$, which by formalized deduction is equivalent to $\la n \ra_{\T +  Q^k_n(\varphi) } \varphi$. Then by assumption  $\T + Q^{k+1}_{n}(\varphi) \vdash \la n \ra_{\T + Q^k_n(\varphi) + \psi} (\varphi)$ and hence $\T + Q^{k+1}_{n}(\varphi) \vdash \la n \ra_{\T} ( Q^k_n(\varphi) \wedge \varphi \wedge \psi ) \vdash \la n \ra_{\T} ( \varphi \wedge \psi ) $. Since, by the formalized deduction theorem, $\Pi_ {n+1}\mhyphen\RR^n(\T + \varphi)$ is equivalent to the rule 
%\begin{prooftree}
%\AxiomC{$\psi$}
%\UnaryInfC{ $ \la n \ra_{\T} (\varphi \wedge \psi).$}
%\end{prooftree}

%we conclude the proof.

Note that here it is essential that we consider the reflection rule on extensions of $\T$, since the claim doesn't hold in general. Consider for instance the theory $\EA + \la n + 1 \ra_{\T}\top + \RR_n(\EA)$. It is the consequence of the reduction property that:

\begin{proposition}

 $ ( \EA + \la n + 1 \ra_{\EA}\top ) + \RR^n(\EA) \not \equiv (\EA  +  \la n + 1\ra_{\EA}\top) + \Pi_{n+1}\mhyphen \RR^n(\EA) $.
\end{proposition}

The proof is in the appendix.
%\begin{corollary} 
%$\T_{\mathit{n}}^{\omega}   \equiv \T + \{ \la n \ra_{\T}^k \top ; \ k < \omega \} \equiv \T + \RR^n(\T)$.
%\end{corollary}
%
The fact that the reflection schema is equivalent to $n$-consistency together with the Proposition \ref{ReductionRuleEquiv} allows us to reformulate the reduction property.
 
\begin{corollary}[Arithmetic reduction property]\label{theorem:ReductionPropertyArith}
Let \T be a $\Pi_{n+2}$ extension of \EA and $\varphi$ be any arithmetic formula, then:
\[\EA^+ \vdash \T + \la n+1 \ra_{\T} \varphi \ \equiv_{\Pi^0_{n+1}} \ \T + \{ Q^k_n(\varphi) \mid k<\omega \}.\]
\end{corollary}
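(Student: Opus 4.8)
The plan is to assemble the statement from three facts already in hand: Proposition~\ref{RFNnCon}, the full Reduction Property (Theorem~\ref{ReductionFull}) specialised to the case where the outer theory equals $\T$, and Proposition~\ref{ReductionRuleEquiv}. Writing $A := \T + \la n+1 \ra_{\T}\varphi$ and $D := \T + \{\, Q^k_n(\varphi) \mid k<\omega \,\}$ for the two theories in the statement, I would introduce the two intermediate theories $B := \T + \RFN_{\Sigma_{n+1}}(\T+\varphi)$ and $C := \T + \Pi_{n+1}\mhyphen\RR^n(\T+\varphi)$ and establish the chain $A \equiv B$, then $B \equiv_{\Pi^0_{n+1}} C$, then $C \equiv D$, from which the claim follows by transitivity, since a full provable equivalence is in particular a $\Pi_{n+1}$-conservation.

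First I would record $A \equiv B$: this is exactly Proposition~\ref{RFNnCon} with index $n+1$ in place of $n$, and is provable already in \EA. Likewise $C \equiv D$ is Proposition~\ref{ReductionRuleEquiv}(2), again over \EA. The genuinely non-trivial bridge $B \equiv_{\Pi^0_{n+1}} C$ I would obtain from Theorem~\ref{ReductionFull} instantiated with the outer theory $\U := \T$ — legitimate precisely because $\T$ is assumed to be a $\Pi_{n+2}$-extension of \EA — and with the inner theory taken to be $\T+\varphi$, which is elementary presented and contains \EA. That instantiation yields that $B$ is $\Pi_{n+1}$-conservative over $C$; since this half of the Reduction Property is the part resting on cut-elimination, it is here that the whole argument is pinned to $\EA^+$ rather than \EA.

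To upgrade the one-directional conservation of Theorem~\ref{ReductionFull} into the two-directional relation $\equiv_{\Pi^0_{n+1}}$ I still owe the reverse inclusion $C \subseteq B$, equivalently $B \vdash Q^k_n(\varphi)$ for every $k$. This is the ``easy'' direction, that the reflection axiom dominates the reflection rule, and I would establish it purely modally. Using the formalized deduction theorem one has $\EA \vdash \la n+1 \ra_{\T}\varphi \leftrightarrow \la n+1 \ra_{\T+\varphi}\top$ and $\EA \vdash Q^k_n(\varphi) \leftrightarrow \la n \ra^k_{\T+\varphi}\top$ (unwinding the definition of the $Q$-formulae as in the Observation), so it suffices to prove $\glp_\omega \vdash \la n+1 \ra \top \to \la n \ra^k \top$ and to transfer it by arithmetical soundness under the realization sending the atoms to $\T+\varphi$-provability. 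The modal implication follows by a short external induction on $k$: Proposition~\ref{PushBiggerDiamondOut} gives $\la n+1 \ra \la n \ra \psi \leftrightarrow \la n+1 \ra \top \wedge \la n \ra \psi$, whence together with negative introspection $\la n+1 \ra \top \wedge \la n \ra^k \top \to \la n \ra^{k+1}\top$, which drives the induction from the base case $\la n+1 \ra \top \to \la n \ra \top$.

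Putting the pieces together, any $\Pi_{n+1}$ theorem of $A$ is a theorem of $B$ (as $A \equiv B$), hence of $C$ (by conservation), hence of $D$ (as $C \equiv D$); conversely $D \subseteq B \equiv A$, so every theorem of $D$, in particular every $\Pi_{n+1}$ one, is a theorem of $A$. Thus $A \equiv_{\Pi^0_{n+1}} D$, and the entire derivation goes through in $\EA^+$. The only point demanding care — the ``main obstacle'', which is bookkeeping rather than depth — is to keep the indices aligned ($\RFN_{\Sigma_{n+1}}$ with $\la n+1\ra$, and the rule $\RR^n$ and the $Q$-formulae with $\la n\ra$), to verify that the $\U=\T$ instantiation of Theorem~\ref{ReductionFull} genuinely meets its hypotheses, and, if $\varphi$ carries parameters, to read everything uniformly in the free variables (replacing $\T+\varphi$ by $\T$ together with the relevant universal closure), none of which affects the structure of the argument.
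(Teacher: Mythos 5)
Your proposal is correct and follows exactly the route the paper intends: the corollary is assembled from Proposition~\ref{RFNnCon} (with index $n+1$), Theorem~\ref{ReductionFull} instantiated at $\U=\T$ and inner theory $\T+\varphi$, and Proposition~\ref{ReductionRuleEquiv}. Your explicit treatment of the easy converse inclusion (deriving $Q^k_n(\varphi)$ from $\la n+1\ra_\T\varphi$ via $\glp_\omega\vdash\la n+1\ra\top\to\la n\ra^k\top$ and arithmetical soundness) is a detail the paper leaves implicit, but it is the standard argument and does not change the structure.
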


\subsection{$\Pi_{j+1}$-consequences}

Now we want to characterize exactly the $\Pi_{j+1}$-consequences of $n+1$ consistency statements, for any $j\leq n$. It is enough to consider only worms due to the following fact:

\begin{proposition}\label{deduction} For any r.e. theory $\T$ and any formula $\varphi$, 

\[\la n \ra_{\T + \varphi} \top \equiv_{\Pi_{n+1}} \{ \la n \ra_{\T + \varphi}^k \top \mid k < \omega \} \]
 \center{iff}
\[ \la n \ra_{\T} \varphi \equiv_{\Pi_{n+1}} \{ Q^k_n(\varphi) \mid k < \omega \}. \]

\end{proposition}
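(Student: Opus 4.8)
The plan is to show that the two displayed $\Pi_{n+1}$-equivalences are, after replacing each theory by a pairwise \EA-provably equivalent one, literally the same assertion. Everything rests on the formalized deduction theorem for the graded provability predicates, which gives, provably in \EA,
\[
[n]_{\T + \varphi}\psi \leftrightarrow [n]_{\T}(\varphi \to \psi).
\]
Substituting $\neg\psi$ for $\psi$ and negating both sides yields the form I actually need,
\[
\la n \ra_{\T + \varphi}\psi \leftrightarrow \la n \ra_{\T}(\varphi \wedge \psi),
\]
again provably in \EA.

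First I would record two families of \EA-provable equivalences. Taking $\psi = \top$ in the last display gives $\EA \vdash \la n \ra_{\T}\varphi \leftrightarrow \la n \ra_{\T+\varphi}\top$, which relates the two single formulas standing on the left of each $\equiv_{\Pi_{n+1}}$. Next I would prove, by induction on $k$, that $\EA \vdash Q^k_n(\varphi) \leftrightarrow \la n \ra^k_{\T+\varphi}\top$. The base case $k=0$ is the identity $\top \leftrightarrow \top$. For the inductive step I would unfold $\la n \ra^{k+1}_{\T+\varphi}\top = \la n \ra_{\T+\varphi}\la n \ra^k_{\T+\varphi}\top$, apply the displayed equivalence to pull the outer modality back to \T\ as $\la n \ra_{\T}(\varphi \wedge \la n \ra^k_{\T+\varphi}\top)$, and then invoke the induction hypothesis together with the fact that $\la n \ra_{\T}$ respects \EA-provable equivalence of its argument (a consequence of the formalized derivability conditions) to rewrite this as $\la n \ra_{\T}(\varphi \wedge Q^k_n(\varphi)) = Q^{k+1}_n(\varphi)$.

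With these equivalences in hand the proposition is immediate. Over the common base the formula $\la n \ra_{\T}\varphi$ and the formula $\la n \ra_{\T+\varphi}\top$ axiomatize the same theory, and likewise $\{Q^k_n(\varphi) \mid k<\omega\}$ and $\{\la n \ra^k_{\T+\varphi}\top \mid k<\omega\}$ axiomatize the same theory. Since replacing the axioms of a theory by \EA-provably equivalent ones preserves all consequences, and in particular the $\Pi_{n+1}$-consequences, the left-hand $\equiv_{\Pi_{n+1}}$ statement holds if and only if the right-hand one does, which is exactly the claimed biconditional.

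The only point that requires genuine care, and hence the main obstacle, is the iterated use of the formalized deduction theorem: one must check that $[n]_{\T+\varphi}\psi \leftrightarrow [n]_{\T}(\varphi\to\psi)$ is available inside \EA\ uniformly in the coded formula $\psi$, so that it may be applied with $\psi = \la n \ra^k_{\T+\varphi}\top$ for every $k$ underneath an outer modality. This is the standard formalized deduction theorem for graded provability, so no new difficulty arises; the remainder is the routine induction on $k$ and the trivial observation that \EA-equivalent axiomatizations share the same $\Pi_{n+1}$-theory.
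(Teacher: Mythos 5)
Your proposal is correct and follows essentially the same route as the paper's (much terser) proof: the formalized deduction theorem gives $\EA \vdash \la n \ra_{\T+\varphi}\top \leftrightarrow \la n \ra_{\T}\varphi$, and an induction on $k$ gives $\EA \vdash Q^k_n(\varphi) \leftrightarrow \la n \ra^k_{\T+\varphi}\top$, after which the two conservativity statements coincide. You have merely spelled out the details (the diamond form of the deduction theorem and the substitution under the modality) that the paper leaves implicit.
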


\begin{proof}

By the formalized deduction theorem, $\EA \vdash \la n \ra_{\T + \varphi} \top \leftrightarrow  \la n \ra_{\T} \varphi$ and then, by induction on $k$, $\EA \vdash  Q^k_n(\varphi) \leftrightarrow \la n \ra_{\T + \varphi}^k \top$.
%
%\[ \vdash_{EA} \la n \ra_{T + \varphi} \top \leftrightarrow \neg [ n ]_{T + \varphi} \neg \top   \leftrightarrow  \neg [ n ]_{T} \varphi \rightarrow \bot  \leftrightarrow \la n \ra \varphi, \]
%
%by formalized deduction theorem.
%
\end{proof}

We will now formulate a generalization of the Reduction Property. For this we need to consider the following rule which is new.

\begin{definition} Given an elementary presentable theory \T and $m, n < \omega$, the $j$-$n$-reflection rule over \T is: 
\begin{prooftree}
\AxiomC{$\psi$}
\LeftLabel{$\RR^{jn}(\T)$:}
\UnaryInfC{ $ \la j \ra_{\T} \la n \ra_{\T} \psi$}
\end{prooftree}
\end{definition}

We can now prove a generalization of the reduction property theorem and characterize exactly the $\Pi_{j+1}$ consequences of $\la n + 1 \ra_{\T}\top$.

\begin{theorem} \label{JNReduction}Let \T be an elementary presented theory containing \EA, and let
\U be any $\Pi_{n+2}$-extension of \EA. Then $\U +\RFN_{\Sigma_{n+1}}(\T)$ is $\Pi_ {j+1}$-conservative over $\U + \Pi_{j+1}\mhyphen\RR^{jn}(\T)$. 
\end{theorem}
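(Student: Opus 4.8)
The plan is to peel off the left-hand side with Proposition \ref{RFNnCon}, to offload the hard arithmetical (cut-elimination) content onto the classical Reduction Property, and to reduce what remains to a purely modal comparison of two reflection rules that is then settled with the worm machinery of Section \ref{section:provLogics}. First I would rewrite the hypothesis theory: by Proposition \ref{RFNnCon} we have, over \EA, that $\U + \RFN_{\Sigma_{n+1}}(\T) \equiv \U + \la n+1\ra_{\T}\top$, so the claim becomes that $\U + \la n+1\ra_{\T}\top$ is $\Pi_{j+1}$-conservative over $\U + \Pi_{j+1}\mhyphen\RR^{jn}(\T)$. Now apply Theorem \ref{ReductionFull}: the theory $\U + \la n+1\ra_{\T}\top$ is $\Pi_{n+1}$-conservative over $\U + \Pi_{n+1}\mhyphen\RR^{n}(\T)$, and since $j\le n$ gives $\Pi_{j+1}\subseteq\Pi_{n+1}$ it is a fortiori $\Pi_{j+1}$-conservative over $\U + \Pi_{n+1}\mhyphen\RR^{n}(\T)$. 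Hence it suffices to prove the reduced statement that $\U + \Pi_{n+1}\mhyphen\RR^{n}(\T)$ is $\Pi_{j+1}$-conservative over $\U + \Pi_{j+1}\mhyphen\RR^{jn}(\T)$; call this $(\star)$. The point of this move is that it lets me keep \U at the full complexity $\Pi_{n+2}$ demanded by the hypothesis, since everything that is sensitive to that complexity bound is already packaged inside Theorem \ref{ReductionFull}, while $(\star)$ only compares two rule-based theories.

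The inclusion of $\U + \Pi_{j+1}\mhyphen\RR^{jn}(\T)$ in $\U + \Pi_{n+1}\mhyphen\RR^{n}(\T)$ is the easy half, and it shows $(\star)$ is a genuine conservativity statement rather than a two-way equivalence. Given $\psi\in\Pi_{j+1}\subseteq\Pi_{n+1}$ provable in the latter theory, one application of $\RR^{n}(\T)$ yields $\la n\ra_{\T}\psi$, which is again $\Pi_{n+1}$, and a second application yields $\la n\ra_{\T}\la n\ra_{\T}\psi$; since $j\le n$, negative introspection (arithmetically sound by the Arithmetical soundness theorem) turns this into $\la j\ra_{\T}\la n\ra_{\T}\psi$, which is exactly the conclusion of $\RR^{jn}(\T)$. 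Thus the $jn$-rule is admissible over $\U + \Pi_{n+1}\mhyphen\RR^{n}(\T)$.

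The heart of the matter is the conservativity direction of $(\star)$. Here I would argue by induction on the number of applications of $\RR^{n}(\T)$, in the style of the $(\subseteq)$-direction of Proposition \ref{ReductionRuleEquiv}: a derivation assembles a finite reflection tower $\la n\ra_{\T}\chi_1,\dots,\la n\ra_{\T}\chi_m$ with $\chi_i\in\Pi_{n+1}$, and one must show that each $\Pi_{j+1}$-consequence $\pi$ of $\U+\{\la n\ra_{\T}\chi_i\}$ already follows from the $jn$-rule. The obstruction, which is absent in the $j=n$ case of Proposition \ref{ReductionRuleEquiv}, is that the formulas $\chi_i$ and $\la n\ra_{\T}\chi_i$ live in $\Pi_{n+1}$ rather than $\Pi_{j+1}$, so the $jn$-rule — which only fires on $\Pi_{j+1}$ inputs — cannot be matched against them directly as $Q^{k+1}_n=\la n\ra(\varphi\wedge Q^{k}_n)$ was matched against a single $\RR^n$-output. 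To break this I would use Proposition \ref{deduction} and the fact that the relevant $Q$-formulas are worms to pass, via the Arithmetical soundness theorem, to a \glp-theoretic statement comparing the worms $\la n\ra^{k}\top$ that generate the $n$-rule theory with the worms obtained by iterating $\la j\ra\la n\ra$ that generate the $jn$-rule theory, and reduce everything to showing that the latter family is $<_j$-cofinal among the $\Pi_{j+1}$-worm-consequences of $\la n+1\ra\top$. The two tools that drive the collapse are again negative introspection, which shows that between the bounds $j$ and $n$ only the outermost modality $\la j\ra$ and the innermost $\la n\ra$ matter at the $\Pi_{j+1}$ level, and Lemma \ref{GammaOrder} with the order function $o_j$, which converts the cofinality into a comparison of ordinals computed through the hyperexponentials $e^{\mu}$.

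I expect this last comparison to be the main obstacle, for two reasons. First, the correct normal form for the $jn$-rule theory must be pinned down: it is not immediate whether the bare iterates $(\la j\ra\la n\ra)^{k}\top$ suffice or whether, as with the $Q$-formulas, one must carry conjunctions of earlier iterates, since $\la j\ra\la n\ra A\to A$ is \emph{not} a \glp-theorem and so the iterates are not linearly pre-ordered for free. Second, once the normal form is fixed, the genuine content of the generalization sits in the ordinal bookkeeping with $o_j$ that certifies $\sup_k o_j\big((\la j\ra\la n\ra)^{k}\top\big)=o_j(\la n+1\ra\top)$ together with enough downward density. The transfer between arithmetic $\Pi_{j+1}$-conservativity and \glp-conservativity for a base \U of complexity $\Pi_{n+2}$ is a secondary but real point, which I would not attempt to reprove from arithmetical completeness but rather would route through Theorem \ref{ReductionFull} exactly as in the classical case.
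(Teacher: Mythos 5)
Your reduction of the theorem to the intermediate claim $(\star)$ --- that $\U + \Pi_{n+1}\mhyphen\RR^{n}(\T)$ is $\Pi_{j+1}$-conservative over $\U + \Pi_{j+1}\mhyphen\RR^{jn}(\T)$ --- is logically sound, and your admissibility argument for the $jn$-rule via two applications of $\RR^{n}(\T)$ and negative introspection is fine. But $(\star)$ is where the entire content of the theorem lives, and your sketch for it does not go through. Two concrete obstacles. First, you propose to normalize the $\RR^{n}(\T)$-closure into worms and $Q$-formulas (via Propositions \ref{ReductionRuleEquiv} and \ref{deduction}) and then finish by modal reasoning; but that normalization is only valid when the rule is taken over an extension of the base theory itself. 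For a general $\Pi_{n+2}$-base $\U$ and a possibly different $\T$, the theory $\U + \Pi_{n+1}\mhyphen\RR^{n}(\T)$ is \emph{not} axiomatized by iterated consistency statements --- the paper's counterexample $(\EA + \la n+1\ra_{\EA}\top) + \RR^{n}(\EA)$ is exactly a warning against this. The rule may fire on arbitrary $\Pi_{n+1}$ theorems of the growing theory, so your induction on the number of rule applications must handle arbitrary $\chi_i\in\Pi_{n+1}$, not worms. Second, even where worms do appear, the Arithmetical Soundness theorem only transports $\glp$-theorems \emph{into} arithmetic; it cannot convert an arithmetical $\Pi_{j+1}$-conservativity claim over $\U$ into a $\glp$-conservativity claim and back. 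The algebraic $\Pi_j$-consequence result (Theorem \ref{PijAlgebraic}) is proved separately in the paper precisely because no such transfer is available, and importing it here would be unjustified.

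The paper instead proves Theorem \ref{JNReduction} directly, by adapting the cut-elimination proof of Theorem \ref{ReductionFull}: one takes a cut-free derivation of a sequent $\neg U, \neg\RFN_{\Sigma_{n+1}}(\T), \Pi$ with $\Pi\subseteq\Pi_{j+1}$ and shows by induction on its height that $\U + \Pi_{j+1}\mhyphen\RR^{jn}(\T)$ proves $\bigvee\Gamma^-$ for a suitably reduced sequent $\Gamma^-$. The only genuinely new step is the $\wedge$-introduction for ${\sf Prf}_{\T}(t,\ulcorner\neg\varphi(\dot s)\urcorner)\wedge\varphi(s)$: one application of $\Pi_{j+1}\mhyphen\RR^{jn}$ to the induction hypothesis, together with provable $\Sigma_{j+1}$-completeness, yields $\la j\ra_{\T}\la n\ra_{\T}\varphi(\dot s)\vee\bigvee\Delta^-$, which is then cut against $[j]_{\T}[n]_{\T}\neg\varphi(\dot s)\vee\bigvee\Delta^-$ obtained from the other premise. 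If you keep your decomposition you would still need essentially this cut-elimination argument to establish $(\star)$ --- the Reduction Property and its variants are only provable in $\EA^+$, so no elementary rule-permutation or purely modal argument can suffice --- and at that point the detour through Theorem \ref{ReductionFull} buys nothing.
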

Note that $\T + \RR^{jn}(\T + \varphi) \equiv  \T + \{ \la j \ra_{\T} ( \varphi \wedge Q^k_n(\varphi) ) \mid i, k < \omega \}$, thus we get the characterization of $\Pi_j$-consequences. 

\begin{proposition}\label{PiJArithmetic}
$\T + \RR^{jn}(\T + \varphi) \equiv  \T + \{ \la j \ra_{\T} ( \varphi \wedge Q^k_n(\varphi) ) \mid i, k < \omega \}$
\end{proposition}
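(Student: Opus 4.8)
The plan is to follow the template of Proposition~\ref{ReductionRuleEquiv}, but now carrying the extra outer modality $\la j \ra$ along. The first move is to pass everything to the theory $\T+\varphi$ using the formalized deduction theorem, exactly as in the proof of Proposition~\ref{deduction}. Under this translation a single application of the rule $\RR^{jn}(\T+\varphi)$ to a premise $\psi$ produces $\la j \ra_{\T}(\varphi \wedge \la n \ra_{\T}(\varphi \wedge \psi))$, while the claimed axiom $\la j \ra_{\T}(\varphi \wedge Q^{k}_{n}(\varphi))$ is nothing but $\la j \ra$ prefixed to $Q^{k}_{n}(\varphi) = \la n \ra_{\T + \varphi}^{k}\top$. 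Thus both theories are axiomatized over $\T$ by worms over $\T+\varphi$, and by the arithmetical soundness theorem it suffices to carry out the argument purely modally in $\glp_\omega$.

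For the inclusion $(\subseteq)$ I would argue by induction on the number of applications of the rule, the only nontrivial case being a last inference producing $\chi = \la j \ra_{\T} \la n \ra_{\T} \psi$ from a previously derived $\psi$. By the induction hypothesis $\psi$ follows from finitely many, hence from the single strongest, axiom $\la j \ra_{\T}(\varphi \wedge Q^{k}_{n}(\varphi))$, so $\T + \varphi \vdash \la j \ra_{\T}(\varphi \wedge Q^{k}_{n}(\varphi)) \to \psi$. Necessitating for $[n]$, distributing, and then prefixing $\la j \ra$ gives $\T + \varphi \vdash \la j \ra_{\T} \la n \ra_{\T}(\varphi \wedge Q^{k}_{n}(\varphi)) \to \chi$. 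It then remains to check that the antecedent is provable on the right: using Proposition~\ref{PushBiggerDiamondOut} (applicable since $j<n$) it equals $\la j \ra_{\T}(\la n \ra_{\T}\top \wedge \la j \ra_{\T}Q^{k}_{n}(\varphi))$, and by negative introspection $\la n \ra_{\T}X \to \la j \ra_{\T}X$ together with diamond monotonicity this is implied by $\la j \ra_{\T}(\varphi \wedge Q^{k+1}_{n}(\varphi))$, which is again an axiom of the right-hand theory. This closes the induction.

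For the inclusion $(\supseteq)$ I would start from $\top$ and iterate the rule, obtaining the worms $(\la j \ra_{\T} \la n \ra_{\T})^{m}$, and normalize their nested modalities by repeated use of Proposition~\ref{PushBiggerDiamondOut} so as to compare them with the family $\{\la j \ra_{\T}(\varphi \wedge Q^{k}_{n}(\varphi))\}$. The main obstacle lies precisely here: the rule generates worms in which the single occurrences of $\la n \ra$ are separated by occurrences of $\la j \ra$, whereas the target family iterates $\la n \ra$ under a single $\la j \ra$, so the match is not achieved by one application of the rule but is a statement about mutual cofinality in the linear order of worms. I would settle this with Lemma~\ref{GammaOrder} and the order function $o$, computing how many iterations of the rule are needed to dominate a given $Q^{k}_{n}(\varphi)$ and conversely. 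This order-theoretic bookkeeping, rather than any isolated modal manipulation, is where the genuine content of the proof resides.
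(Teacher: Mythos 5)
Your $(\subseteq)$ direction contains a local slip. Necessitating the induction hypothesis $\la j \ra_{\T}(\varphi \wedge Q^{k}_{n}(\varphi)) \to \psi$ under $[n]$ and then under $[j]$ yields the implication $\la j \ra\la n\ra\la j \ra(\varphi \wedge Q^{k}_{n}(\varphi)) \to \chi$, with the inner $\la j \ra$ retained from the antecedent of the I.H.; you silently replace this by the weaker antecedent $\la j \ra\la n\ra(\varphi \wedge Q^{k}_{n}(\varphi))$, and the subsequent appeal to Proposition~\ref{PushBiggerDiamondOut} does not produce the formula you write (that lemma moves a \emph{strictly smaller} inner diamond past a larger one, which is not the configuration at hand). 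The repair is the one the paper hints at: show directly, by negative introspection under the first two modalities, that $Q^{k+2}_{n}(\varphi) \vdash \la j \ra\la n \ra\la j \ra(\varphi\wedge Q^k_n(\varphi))$, so that the correct antecedent follows from the axiom $\la j \ra(\varphi\wedge Q^{k+2}_n(\varphi))$. This much is fixable.

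The $(\supseteq)$ direction is the real problem, and the ``order-theoretic bookkeeping'' you defer to does not close it --- it goes the other way. As your own normalization via Proposition~\ref{PushBiggerDiamondOut} shows, $m$ applications of the rule produce exactly $(\la j \ra\la n \ra)^m \top \equiv Q^m_j(\la n \ra_{\T+\varphi}\top)$ over $\T+\varphi$, and by the Reduction Property (Theorem~\ref{theorem:ReductionPropertyGLP} applied at level $j$) this whole family is $\equiv_j$ to the single formula $\la j+1\ra\la n\ra_{\T+\varphi}\top$. For $j<n$ that formula does not prove $\la j \ra\la n\ra\la n\ra_{\T+\varphi}\top$: it would then prove $\la j \ra\la j+1\ra\la n\ra_{\T+\varphi}\top$, i.e.\ its own $\la j \ra$, contradicting L\"ob. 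Concretely, with $\varphi=\top$, $j=0$, $n=1$, $k=2$, the target axiom is $\la 0 \ra\la 1 \ra\la 1 \ra\top$, whereas $\{(\la 0\ra\la 1\ra)^m\top \mid m<\omega\} \equiv_0 \la 1 \ra\la 1 \ra\top \nvdash \la 0 \ra\la 1 \ra\la 1 \ra\top$; in terms of Lemma~\ref{GammaOrder} one computes $o((\la 0\ra\la 1\ra)^m\top)=\omega\cdot m+1 < \omega^2+1 = o(\la 0\ra\la 1\ra\la 1\ra\top)$ for every $m$. So no number of iterations of $\RR^{jn}$ dominates the axioms with $k\ge 2$, and the inclusion fails. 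Note that you have in fact put your finger on a defect in the statement itself: the paper's own argument rests on the claim that $\la j \ra\la n\ra\la j \ra Q^k_n(\varphi)$ implies $\la j \ra\la n\ra Q^k_n(\varphi)$, which fails for $k\ge 1$ by the same comparison. As it stands the proposition needs either a stronger rule (e.g.\ simultaneous closure under $\RR^n(\T+\varphi)$, so that $Q^k_n(\varphi)$ is available as a premise) or a weakening of $\equiv$ to $\equiv_j$; merely promising the cofinality computation does not, and cannot, finish the proof as stated.
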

\begin{proof}

The proof of this is analogous to the proof of Proposition \ref{ReductionRuleEquiv}, since the rule $\RR^{jn}(\T + \varphi)$ is equivalent to the rule 
\begin{prooftree}
\AxiomC{$\psi$}
\UnaryInfC{ $ \la j \ra_{\T}(\varphi \wedge \la n \ra_{\T}(\varphi \wedge \psi))$.}
\end{prooftree}

For the $(\supseteq)$ direction after applying the $j\mhyphen$-reflection rule to the I.H. use the fact that $\la j \ra_{\T + \varphi} \la n \ra_{\T + \varphi} \la j \ra Q_n^k(\varphi)$ implies $\la j \ra_{\T + \varphi} \la n \ra_{\T + \varphi} Q_n^k(\varphi)$ for $j \leq n$.

For the $(\subseteq)$ direction one has to apply necessitation and MP under the box twice, for $j$ and $n$, and use the fact that for some $l$, $$ Q_n^{l}(\varphi) \vdash \la j \ra_{\T + \varphi} \la n \ra_{\T + \varphi} \la j \ra_{\T + \varphi} \la n \ra_{\T + \varphi} Q^k_n(\varphi).$$
\end{proof}

\begin{corollary}
Let \T be a $\Pi_{n+2}$ extension of \EA and $\varphi$ be any arithmetic formula, then:
\[\T + \la n+1 \ra_{\T} \varphi \ \equiv_{\Pi^0_{j+1}} \ \T + \{ \la j \ra_{\T}( \varphi \wedge Q^k_n(\varphi)) \mid i,k<\omega \}.\]%\footnote{I decided to go with this formulation instead of $\{\la j \ra ( \varphi \wedge Q^k_n(\varphi)) \mid i,k<\omega \}$ as I did before. Makes proofs simpler and conceptually makes more sense, since the set in the  corollary is weaker. }
\end{corollary}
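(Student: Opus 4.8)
The plan is to transcribe, at the level of the generalized rule $\RR^{jn}$, exactly the derivation that took us from the classical Reduction Property to the Arithmetic Reduction Property of Corollary~\ref{theorem:ReductionPropertyArith}. Concretely, I would chain three facts already available in the excerpt: the identification of iterated consistency with partial reflection (Proposition~\ref{RFNnCon}), the generalized Reduction Property (Theorem~\ref{JNReduction}), and the algebraic characterization of the $j$-$n$-reflection rule (Proposition~\ref{PiJArithmetic}).

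First I would rewrite the left-hand side as a reflection principle: by Proposition~\ref{RFNnCon} (taking the index to be $n+1$) we have $\T + \la n+1\ra_\T\varphi \equiv \T + \RFN_{\Sigma_{n+1}}(\T+\varphi)$, so it suffices to compare $\T + \RFN_{\Sigma_{n+1}}(\T+\varphi)$ with the right-hand side. Next I would invoke Theorem~\ref{JNReduction}, instantiating its ambient theory by $\U := \T$ (legitimate, since \T is a $\Pi_{n+2}$-extension of \EA) and its presented theory by $\T+\varphi$ (which still contains \EA and remains elementary presented, being \T together with a single extra axiom). This yields that $\T + \RFN_{\Sigma_{n+1}}(\T+\varphi)$ is $\Pi_{j+1}$-conservative over $\T + \Pi_{j+1}\mhyphen\RR^{jn}(\T+\varphi)$. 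Finally I would apply Proposition~\ref{PiJArithmetic} to identify $\T + \Pi_{j+1}\mhyphen\RR^{jn}(\T+\varphi)$ with $\T + \{\la j\ra_\T(\varphi \wedge Q^k_n(\varphi)) \mid i,k<\omega\}$. Chaining the three links gives that $\T + \la n+1\ra_\T\varphi$ is $\Pi_{j+1}$-conservative over the right-hand side.

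To upgrade this one-directional conservativity to the claimed $\equiv_{\Pi^0_{j+1}}$, I would also verify the reverse inclusion, namely that $\T + \la n+1\ra_\T\varphi$ proves each axiom $\la j\ra_\T(\varphi \wedge Q^k_n(\varphi))$. This follows because $\T + \la n+1\ra_\T\varphi \vdash Q^{k+1}_n(\varphi)$, i.e.\ $\vdash \la n\ra_\T(\varphi \wedge Q^k_n(\varphi))$, and then $\la j\ra_\T(\varphi \wedge Q^k_n(\varphi))$ is obtained by monotonicity (negative introspection in \glp, via arithmetical soundness), since $j\le n$. With both inclusions in hand the two theories prove exactly the same $\Pi_{j+1}$ sentences.

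The main obstacle is not the chaining but making sure every link applies without hidden index drift; in particular one must check that the premise restriction is harmless, i.e.\ that $\T + \Pi_{j+1}\mhyphen\RR^{jn}(\T+\varphi) \equiv \T + \RR^{jn}(\T+\varphi)$, so that Proposition~\ref{PiJArithmetic} genuinely applies to the restricted rule produced by Theorem~\ref{JNReduction}. This is the $jn$-analogue of Proposition~\ref{ReductionRuleEquiv}(3), and it holds for the same reason: each formula $\la j\ra_\T(\varphi \wedge Q^k_n(\varphi))$ generated by the rule has the shape $\la j\ra_\T(\cdots)$ and hence lies in $\Pi_{j+1}$, so the $\Pi_{j+1}$-bound on the premises never discards anything needed to reconstruct the iteration. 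Once this equivalence is secured, the corollary is a direct transcription of the proof of Corollary~\ref{theorem:ReductionPropertyArith}, with $\RR^n$ replaced by $\RR^{jn}$ and $\Pi_{n+1}$-conservativity replaced by $\Pi_{j+1}$-conservativity.
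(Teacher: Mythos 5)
Your proposal is correct and follows essentially the same route the paper intends for this corollary: the paper derives it by chaining Proposition~\ref{RFNnCon}, Theorem~\ref{JNReduction} (with $\U=\T$ and presented theory $\T+\varphi$), and Proposition~\ref{PiJArithmetic}, exactly mirroring how Corollary~\ref{theorem:ReductionPropertyArith} was obtained from Theorem~\ref{ReductionFull} and Proposition~\ref{ReductionRuleEquiv}. Your two extra checks --- the reverse inclusion via $\la n+1\ra_\T\varphi\vdash Q^{k+1}_n(\varphi)$ plus negative introspection, and the harmlessness of the $\Pi_{j+1}$-restriction on the rule's premises --- are precisely the points the paper leaves implicit (the latter being the $jn$-analogue of Proposition~\ref{ReductionRuleEquiv}(3)), so they make the argument more careful rather than different.
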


And as special case just in terms of worms we obviously have:

\begin{corollary}
Let \T be a $\Pi_{n+2}$ extension of \EA and $\varphi$ be any arithmetic formula, then:
\[\T + \la n+1 \ra_{\T} \top \ \equiv_{\Pi^0_{j+1}} \ \T + \{ \la j \ra_{\T} \la n \ra_{\T}^k \top\mid k<\omega \}.\]
\end{corollary}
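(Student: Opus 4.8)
The plan is to obtain this statement as an immediate specialization of the preceding corollary at $\varphi := \top$, combined with the Observation that $Q^k_n(\top)\leftrightarrow\la n\ra_{\T}^k\top$. First I would instantiate the preceding corollary with $\varphi=\top$. Its left-hand side then reads $\T+\la n+1\ra_{\T}\top$, which is already the desired left-hand side, while its right-hand side becomes the theory axiomatized over $\T$ by the family $\{\la j\ra_{\T}(\top\wedge Q^k_n(\top))\mid k<\omega\}$. So it remains only to identify this family, up to provable equivalence, with the worm axioms $\{\la j\ra_{\T}\la n\ra_{\T}^k\top\mid k<\omega\}$.

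Second I would simplify the matrices inside the outer modality. Over \EA we trivially have $\top\wedge Q^k_n(\top)\leftrightarrow Q^k_n(\top)$, and by the Observation—whose equivalence is provable in \EA by the routine induction on $k$ remarked upon just after its statement—we have $\EA\vdash Q^k_n(\top)\leftrightarrow\la n\ra_{\T}^k\top$. Hence $\EA\vdash(\top\wedge Q^k_n(\top))\leftrightarrow\la n\ra_{\T}^k\top$ for every $k<\omega$. To push these provable equivalences through the outer modality I would then use that $\la j\ra_{\T}$ respects provable equivalence: whenever $\EA\vdash\psi\leftrightarrow\chi$ we also have $\EA\vdash\la j\ra_{\T}\psi\leftrightarrow\la j\ra_{\T}\chi$. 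This is a standard consequence of the Löb derivability conditions for $[j]_{\T}$ (necessitation together with distributivity), which are exactly the arithmetical content of the \glp axioms used throughout the paper. Consequently $\EA\vdash\la j\ra_{\T}(\top\wedge Q^k_n(\top))\leftrightarrow\la j\ra_{\T}\la n\ra_{\T}^k\top$ for each $k$.

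Finally, since each axiom on the right is provably equivalent over \EA (hence over \T) to the corresponding worm axiom $\la j\ra_{\T}\la n\ra_{\T}^k\top$, the two axiom sets generate literally the same deductively closed theory over \T. The $\Pi^0_{j+1}$-conservation relation established by the preceding corollary therefore transfers verbatim, yielding the claimed equivalence $\T+\la n+1\ra_{\T}\top\equiv_{\Pi^0_{j+1}}\T+\{\la j\ra_{\T}\la n\ra_{\T}^k\top\mid k<\omega\}$. There is essentially no obstacle here: the corollary is genuinely a restatement of the previous one. The one point deserving a word of care—and it is the mildest possible one—is the congruence step, namely that provably equivalent matrices may be substituted inside $\la j\ra_{\T}$ while preserving provable equivalence; but this follows directly from the derivability conditions and requires no new idea.
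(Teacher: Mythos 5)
Your proposal is correct and matches the paper's intent exactly: the paper derives this corollary as an immediate special case of the preceding one at $\varphi=\top$ (introducing it with ``as special case just in terms of worms we obviously have''), using the Observation that $Q^k_n(\top)\leftrightarrow\la n\ra_{\T}^k\top$ together with the fact that $\la j\ra_{\T}$ respects provable equivalence. Your write-up simply makes explicit the congruence step that the paper leaves implicit.
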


%By Proposition \ref{deduction}, we have: 
%\begin{corollary}
%Let \T be a $\Pi_{n+2}$ extension of \EA and $\varphi$ be any arithmetic formula, then:
%\[\T + \la n+1 \ra_{\T + \varphi} \top \ \equiv_{\Pi^0_{j+1}} \ \T + \{\la j \ra_{\T +  \varphi} \la n \ra_{\T +  \varphi}^k \mid k<\omega \}.\]
%\end{corollary}

\section{An algebraic formulation}\label{section:algebraicTurn}

%\begin{corollary}[Reduction property for worms]

%If \T is a $\Pi_{n+2}$-extension of \EA, then  $\T+ \la n+1 \ra_{\T} \top$ is a
%$\Pi_{n+1}$-conservative extension of $\T + \{\la  n \ra^k_{\T} \top \mid \ k < \omega \}$.

% $\T^n_{\omega} := \T + n\mhyphen Con(\T) + n\mhyphen Con(\T + n\mhyphen Con(\T)) \ldots \equiv_{\EA} 

%\end{corollary}

The reduction property theorem is formulated in the language of arithmetic. By the Friedman-Goldfarb-Harrington Theorem, for any $\Sigma_1$-formula $\varphi(\vec{x})$ one can prove (in \EA) that if \T is consistent, this formula is equivalent to a formula $\square_{\T}\psi(\dot{\vec x})$ for some $\psi(\vec x)$ 

\begin{theorem}[Friedman-Goldfarb-Harrington] Let \T be a recursively enumerable arithmetic theory\footnote{This implies that it is elementary presentable, by Craig's trick.} and $\varphi(\vec{x}) \in \Sigma_1$, then there is a $\psi(\vec x)$ such that:
$$ \EA \vdash \Diamond_{\T}\top \to \big (\square_{\T}\psi(\dot{\vec x}) \leftrightarrow \varphi(\vec{x}) \big)$$
\end{theorem}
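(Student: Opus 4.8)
The plan is to obtain $\psi$ from a self-referential (diagonal) definition and to verify the two halves of the biconditional separately, one by \emph{provable $\Sigma_1$-completeness} and the other by a formalized Rosser argument; both ingredients are available in \EA. The variables $\vec x$ are kept as parameters throughout: the diagonal lemma applies uniformly to formulas with free variables, and the numeralwise substitution in $\square_{\T}\psi(\dot{\vec x})$ is the usual one, so it suffices to produce a single schematic $\psi(\vec x)$. Write $\varphi(\vec x)=\exists u\,S(\vec x,u)$ with $S\in\Delta_0$ and recall that $\Diamond_{\T}\top$ abbreviates $\neg\square_{\T}\bot$. The target then splits into the soft inclusion $\EA\vdash\varphi(\vec x)\to\square_{\T}\psi(\dot{\vec x})$, which needs no consistency assumption, and the hard inclusion $\EA\vdash\Diamond_{\T}\top\to(\square_{\T}\psi(\dot{\vec x})\to\varphi(\vec x))$.

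The naive choice is the G\"odelian fixed point $\EA\vdash\psi\leftrightarrow(\square_{\T}\psi(\dot{\vec x})\to\varphi(\vec x))$; necessitating it and using distributivity and transitivity ($\square_{\T}\psi\to\square_{\T}\square_{\T}\psi$) yields $\EA\vdash\square_{\T}\psi(\dot{\vec x})\to\square_{\T}\varphi(\dot{\vec x})$, which would finish the hard direction \emph{if \T were $\Sigma_1$-sound}. It is not enough under the weaker hypothesis $\Diamond_{\T}\top$: a merely consistent theory can prove a false $\Sigma_1$ sentence (for $\T=\EA+\neg\Diamond_{\EA}\top$ and $\varphi:=\square_{\EA}\bot$ this very $\psi$ refutes the conclusion). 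Hence I would use a \emph{Rosserian} witness comparison and take $\psi(\vec x)$ to be the $\Sigma_1$ fixed point
\[
\EA\vdash\psi(\vec x)\ \leftrightarrow\ \exists u\big(\,S(\vec x,u)\wedge\forall q{\le}u\ \neg{\sf Prf}_{\T}(q,\ulcorner\psi(\dot{\vec x})\urcorner)\,\big),
\]
i.e. ``$\varphi$ has a witness $u$ that precedes every \T-proof of $\psi$''. The whole right-hand side is $\Sigma_1$, a fact used on both sides.

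For the soft direction, under the assumption $\neg\square_{\T}\psi(\dot{\vec x})$ there is no \T-proof of $\psi$ at all, so the inner bounded clause holds for the witness of $\varphi$; thus $\EA\vdash\varphi(\vec x)\wedge\neg\square_{\T}\psi(\dot{\vec x})\to\psi(\vec x)$, and since $\psi\in\Sigma_1$ provable $\Sigma_1$-completeness gives $\EA\vdash\psi(\vec x)\to\square_{\T}\psi(\dot{\vec x})$; chaining these yields $\EA\vdash\varphi(\vec x)\to\square_{\T}\psi(\dot{\vec x})$. The hard direction is the crux. Reasoning in \EA from $\Diamond_{\T}\top$ and $\square_{\T}\psi(\dot{\vec x})$, I would extract by $\Delta_0$-minimization the \emph{least} code $p$ of a \T-proof of $\psi(\dot{\vec x})$; provable $\Sigma_1$-completeness gives $\square_{\T}{\sf Prf}_{\T}(\dot p,\ulcorner\psi(\dot{\vec x})\urcorner)$, while the fixed point gives $\square_{\T}\exists u(S\wedge\forall q{\le}u\,\neg{\sf Prf}_{\T}(q,\ulcorner\psi\urcorner))$. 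Combining these inside the box forces the witness below $p$, so $\square_{\T}(\exists u{<}\dot p\,S(\vec x,u))$; as $\exists u{<}\dot p\,S$ is $\Delta_0$, provable $\Delta_0$-reflection under consistency---the only place $\Diamond_{\T}\top$ is really needed, and the reason mere consistency now suffices---delivers $\exists u{<}p\,S(\vec x,u)$, hence $\varphi(\vec x)$. The main obstacle is exactly this passage: arranging the comparison so that a genuine external, bounded witness is forced rather than a merely box-internal one, since replacing $p$ by an existential inside the box would only give $\square_{\T}\varphi$ and reopen the $\Sigma_1$-soundness gap. The remaining work is routine bookkeeping that \EA supports: the derivability conditions, $\Delta_0$-minimization, and provable $\Sigma_1$-completeness are all available there, and everything is uniform in $\vec x$.
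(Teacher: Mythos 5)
The paper does not actually prove this theorem: it is quoted as a classical result (Friedman--Goldfarb--Harrington) and used as a black box, so there is no in-paper argument to compare yours against. Judged on its own, your proof is correct and is the standard one for this theorem: the Rosser-style witness-comparison fixed point $\psi(\vec x)\leftrightarrow\exists u\,\big(S(\vec x,u)\wedge\forall q{\le}u\,\neg{\sf Prf}_{T}(q,\ulcorner\psi(\dot{\vec x})\urcorner)\big)$, with the easy direction by provable $\Sigma_1$-completeness (no consistency needed) and the hard direction by extracting the least proof $p$ via $\Delta_0$-minimization, forcing the witness of $S$ below the numeral $\dot p$ inside the box, and then applying $\Delta_0$-reflection, which is exactly where $\Diamond_{T}\top$ enters (via formalized $\Delta_0$-completeness of the negation). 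Your preliminary observation that the na\"ive G\"odel fixed point only works for $\Sigma_1$-sound $T$, with the counterexample $T=\mathrm{EA}+\Box_{\mathrm{EA}}\bot$, correctly identifies why the Rosser trick is needed. Two small points worth making explicit: the theorem implicitly requires $T$ to contain enough arithmetic (in this paper, $T\supseteq\mathrm{EA}$) so that the witness comparison and the linearity of $\le$ can be carried out under the box; and the uniformity in $\vec x$ rests on the diagonal lemma with parameters together with provable $\Sigma_1$-completeness in free-variable form, both of which are indeed available in $\mathrm{EA}$, as you note.
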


In \cite{Joosten:2015:TuringJumpsThroughProvability} this theorem is generalized for $n$-provability, thus in a sense the arithmetical $\Pi^0_{n+1}$ sentences are entirely captured by sentences of the form $\la n\ra_{\T} \varphi$. Then  it makes sense to ask if the reformulation of Theorem \ref{theorem:ReductionPropertyArith} holds in a purely modal/algebraic setting. This question was answered in the affirmative in \cite{Beklemishev2016}, where Beklemishev proves that the Reduction Property hods in an algebraic setting.

\begin{definition}[Algebraic $n$-conservativity] Let $\tau, \sigma$ be sets of $\glp$ formulae. Then we say that $\sigma$ and $\tau$ are $n$-conservative (we write $\tau \equiv_n \sigma$) if, for all formulae $\varphi$ $$\tau \vdash_{\glp} \la n \ra \varphi \text{ iff } \sigma \vdash_{\glp} \la n \ra \varphi.$$
\end{definition}

With this notion of conservativity we can now formulate an algebraic pendant of the reduction property.

\begin{theorem}[Algebraic reduction property]\label{theorem:ReductionPropertyGLP}
Let $\varphi, \psi$ be any modal formulae, then:
%\[ \la n+1 \ra \varphi \vdash_{\glp} \la n \ra \psi \text{ iff } \{ Q^k_n(\varphi) ; k< \omega \} \vdash_{\glp} \la n \ra \psi.\]
%We abbreviate the above in the following way:
$$   \la n+1 \ra \varphi  \equiv_n \{ Q^k_n(\varphi) ; k< \omega \}. $$
\end{theorem}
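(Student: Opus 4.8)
The plan is to establish the two halves of the $n$-conservativity relation $\equiv_n$ separately, after reducing each to a statement about single worm-approximants. I would first record the chain property $\vdash_\glp Q^{k+1}_n(\varphi)\to Q^k_n(\varphi)$, proved by an immediate induction: from $Q^k_n(\varphi)\to Q^{k-1}_n(\varphi)$ we get $\varphi\wedge Q^k_n(\varphi)\to\varphi\wedge Q^{k-1}_n(\varphi)$, and applying $\la n\ra$ monotonically yields the claim. Since $\glp$-derivations are finite, this makes $\{Q^k_n(\varphi);k<\omega\}\vdash_\glp\la n\ra\chi$ equivalent to $Q^k_n(\varphi)\vdash_\glp\la n\ra\chi$ for a single $k$ (the strongest approximant used). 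Unwinding the definition of $\equiv_n$, the theorem thus reduces to the equivalence, for every modal $\chi$,
\[ \la n+1\ra\varphi\vdash_\glp\la n\ra\chi \quad\Longleftrightarrow\quad \exists k\ \big(Q^k_n(\varphi)\vdash_\glp\la n\ra\chi\big). \]

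For the direction $\Longleftarrow$ it suffices to show $\la n+1\ra\varphi\vdash_\glp Q^k_n(\varphi)$ for every $k$, for then composing with $Q^k_n(\varphi)\vdash_\glp\la n\ra\chi$ gives the conclusion. I would prove the auxiliary claim $\la n+1\ra\varphi\vdash_\glp\la n+1\ra(\varphi\wedge Q^k_n(\varphi))$ by induction on $k$. The base case is the equivalence $\la n+1\ra(\varphi\wedge\top)\equiv\la n+1\ra\varphi$. For the step, negative introspection (with $n<n+1$) turns the induction hypothesis into $\la n+1\ra\varphi\vdash_\glp\la n\ra(\varphi\wedge Q^k_n(\varphi))=Q^{k+1}_n(\varphi)$, while Proposition \ref{PushBiggerDiamondOut}, applied with $\gamma=n+1$, $\zeta=n$ and $\psi:=\varphi\wedge Q^k_n(\varphi)$, gives $\la n+1\ra(\varphi\wedge Q^{k+1}_n(\varphi))\equiv\la n+1\ra\varphi\wedge Q^{k+1}_n(\varphi)$; hence $\la n+1\ra\varphi$ together with the freshly derived $Q^{k+1}_n(\varphi)$ proves the $(k+1)$-st instance. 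One more use of negative introspection extracts $\la n+1\ra\varphi\vdash_\glp Q^k_n(\varphi)$ for all $k$, completing this inclusion.

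The direction $\Longrightarrow$ is the genuine content of the Reduction Property and is where essentially all the work lies: from $\la n+1\ra\varphi\vdash_\glp\la n\ra\chi$ one must produce a finite stage $k$ with $Q^k_n(\varphi)\vdash_\glp\la n\ra\chi$. The approach I would take is to mirror, purely inside $\glp$, the cut-elimination reduction used in the arithmetical proofs of Theorems \ref{ReductionFull} and \ref{JNReduction}: inspect a derivation witnessing $\vdash_\glp\la n+1\ra\varphi\to\la n\ra\chi$ and trade every appeal to the $(n+1)$-modality on $\varphi$ for finitely many nested $n$-modalities, i.e.\ for a bounded iterate $Q^k_n(\varphi)$, reading the bound $k$ off the height of the derivation. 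A reformulation that should make the bookkeeping transparent is to pass to the relativised operators $\la m\ra_{\varphi}\psi:=\la m\ra(\varphi\wedge\psi)$, under which $\la n+1\ra\varphi=\la n+1\ra_{\varphi}\top$ and $Q^k_n(\varphi)=\la n\ra_{\varphi}^{\,k}\top$; one verifies that these operators again validate the $\glp$-principles invoked above (negative introspection, transitivity, monotonicity, and a relativised L\"ob principle), so the statement turns into the pure worm-reduction $\la n+1\ra_{\varphi}\top\equiv_n\{\la n\ra_{\varphi}^{\,k}\top\}$, for which the linear order of worms and the order functions of Lemma \ref{GammaOrder} become available. The main obstacle is precisely that the test formula $\chi$ ranges over \emph{arbitrary} modal formulae and not over worms, so the worm order does not by itself close the argument; taming $\chi$ (for instance by a L\"ob-style passage from $\la n\ra\chi$ to a maximal-consistency normal form) and extracting the bound $k$ uniformly is the crux, and is exactly what Beklemishev's algebraic reduction property \cite{Beklemishev2016} supplies.

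Finally, as an independent check I would verify the equivalence through arithmetic: arithmetical soundness sends each $\glp$-derivability statement to every elementary presented $T$ and realisation, the already-proven arithmetic reduction property (Corollary \ref{theorem:ReductionPropertyArith}) together with the generalised Friedman--Goldfarb--Harrington theorem (which identifies $\Pi^0_{n+1}$ sentences with formulae $\la n\ra_T\psi$) yields the matching $\Pi^0_{n+1}$-conservation on the arithmetical side, and arithmetical completeness of $\glp_\omega$ pulls the equivalence back to the calculus. I would treat this only as a sanity check, since the purpose of the algebraic formulation is to have an argument that survives the passage to transfinite modalities, where no arithmetical interpretation is yet available.
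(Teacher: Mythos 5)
The paper gives no proof of this theorem: it is imported from \cite{Beklemishev2016}, where Beklemishev establishes the reduction property in the algebraic setting, so there is no in-paper argument to compare against. Measured against that treatment, you do strictly more. Your reduction of $\equiv_n$ to single approximants via the chain $Q^{k+1}_n(\varphi)\vdash_\glp Q^k_n(\varphi)$ together with the finiteness of derivations is correct, and your proof of the inclusion $\la n+1 \ra \varphi \vdash_\glp Q^k_n(\varphi)$, by inductively deriving $\la n+1 \ra(\varphi\wedge Q^k_n(\varphi))$ from negative introspection and Proposition \ref{PushBiggerDiamondOut}, is exactly right; this is the half of the statement that the paper's later arguments (for instance the $\supseteq$ direction in the proof of Theorem \ref{GeneralizedReductionProperty}) silently rely on.

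For the conservativity direction you candidly defer to \cite{Beklemishev2016}, which is precisely what the paper does, so there is no gap \emph{relative to the paper}. Be aware, though, that neither of the self-contained strategies you sketch would close that direction as described. Cut elimination is a feature of the arithmetical sequent calculus used in Theorems \ref{ReductionFull} and \ref{JNReduction}; \glp has no comparably usable cut-free presentation, and ``reading the bound $k$ off the height of a derivation'' has no direct meaning for Hilbert-style \glp derivations --- Beklemishev's actual argument is algebraic and model-theoretic (via Kripke models for a suitable subsystem and the free \glp-algebra), not a proof-theoretic translation. The arithmetical ``sanity check'' also hides a real obstacle: to pull $\Pi^0_{n+1}$-conservativity back into the calculus one needs an arithmetical \emph{completeness} theorem that is uniform in realizations and applies to derivability from the infinite set $\{Q^k_n(\varphi) ; k<\omega\}$, which is not available off the shelf. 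So the hard direction should be treated as genuinely external, exactly as the paper treats it.
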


\subsection{$\Pi_{j+1}$-consequences: algebraic version}

First, we prove the algebraic version of Proposition \ref{PiJArithmetic}, that is, we characterize exactly the $\Pi_{j+1}$-consequences of $\la n+1 \ra \varphi$ for $j \leq n$.

\begin{observation} \label{PiJ} If $A \equiv_n B$ and $A \vdash \la n \ra \top$, then $ A \equiv_j B $, for $j \leq n$. 
\end{observation}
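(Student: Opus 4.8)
The plan is to reduce $\equiv_j$-conservativity back to the assumed $\equiv_n$-conservativity by pushing the modality $\la j \ra$ underneath $\la n \ra$ via Proposition~\ref{PushBiggerDiamondOut}. The case $j = n$ is immediate, since then $\equiv_j$ is literally $\equiv_n$, so I may assume $j < n$. First I would observe that $\equiv_n$-conservativity already transfers the hypothesis $A \vdash \la n \ra \top$ to $B$: instantiating the definition of $A \equiv_n B$ with the formula $\top$ gives $A \vdash \la n \ra \top$ iff $B \vdash \la n \ra \top$, so both $A$ and $B$ prove $\la n \ra \top$.

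The key ingredient is an instance of Proposition~\ref{PushBiggerDiamondOut}: taking $\gamma = n$, $\zeta = j$ (legitimate since $j < n$) and letting the inner conjunct be $\top$, we obtain $\vdash_{\glp} \la n \ra \la j \ra \psi \leftrightarrow (\la n \ra \top \wedge \la j \ra \psi)$ for every formula $\psi$. From this equivalence together with $A \vdash \la n \ra \top$, I would derive that $A \vdash \la j \ra \psi$ iff $A \vdash \la n \ra \la j \ra \psi$ for all $\psi$: in the forward direction one conjoins the derived $\la j \ra \psi$ with the available $\la n \ra \top$ and applies the equivalence, while the backward direction uses only that $\la n \ra \la j \ra \psi \vdash \la j \ra \psi$, which is read off directly from the right-hand conjunct of the same equivalence. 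The identical argument, now using $B \vdash \la n \ra \top$, yields $B \vdash \la j \ra \psi$ iff $B \vdash \la n \ra \la j \ra \psi$.

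Finally I would chain these equivalences. Applying $A \equiv_n B$ to the formula $\chi := \la j \ra \psi$ gives $A \vdash \la n \ra \la j \ra \psi$ iff $B \vdash \la n \ra \la j \ra \psi$. Combining this with the two equivalences just established, I get $A \vdash \la j \ra \psi$ iff $B \vdash \la j \ra \psi$ for every $\psi$, which is exactly $A \equiv_j B$. I expect no real obstacle here: the whole argument is a short bookkeeping around one instance of Proposition~\ref{PushBiggerDiamondOut}. The one step worth flagging is the transfer of $\la n \ra \top$ from $A$ to $B$ via $\equiv_n$, since it is precisely this that makes the backward direction of the $B$-side equivalence available; without it the argument would only give one inclusion.
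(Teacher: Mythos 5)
Your proof is correct and follows essentially the same route as the paper's (very terse) argument: use $A\vdash\la n\ra\top$ together with Proposition~\ref{PushBiggerDiamondOut} to trade $\la j\ra\psi$ for $\la n\ra\la j\ra\psi$, transfer across $\equiv_n$, and extract $\la j\ra\psi$ on the $B$ side. Your write-up is in fact more explicit than the paper's, which elides the push-under-$\la n\ra$ step entirely.
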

\begin{proof}
  Assume $A \vdash \la j \ra \varphi$. Since $A \equiv_n B$ and $A \vdash \la n \ra \top$, we have that $B \vdash \la n \ra \top$ and thus $B \vdash \la j \ra \varphi$.

\end{proof}

\begin{theorem}[$\Pi_j$-consequences]\label{PijAlgebraic} For $j \leq n < \omega$,
   $$ \la n + 1 \ra \varphi \equiv_j   \{  \la j \ra (\varphi \wedge Q_n^k(\varphi)); k < \omega \}$$
%  $$ \la n + 1 \ra \varphi \equiv_j   \{  Q(\la j \ra \la n \ra^k\top, \varphi) \mid k < \omega \}$$
  
\end{theorem}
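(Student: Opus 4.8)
The plan is to prove the two directions of the $\equiv_j$ equivalence separately, leaning on the already-established algebraic reduction property (Theorem~\ref{theorem:ReductionPropertyGLP}) together with Observation~\ref{PiJ}. First I would try to reduce the present statement to the $n$-level result. Setting $A := \la n+1\ra\varphi$ and $B := \{\la j\ra(\varphi\wedge Q_n^k(\varphi));k<\omega\}$, I want to show $A\equiv_j B$, i.e.\ that $A\vdash_{\glp}\la j\ra\chi$ iff $B\vdash_{\glp}\la j\ra\chi$ for every formula $\chi$. The natural strategy is to introduce the intermediate set $C := \{Q_n^k(\varphi);k<\omega\}$, for which Theorem~\ref{theorem:ReductionPropertyGLP} already gives $A\equiv_n C$, and then show $C\equiv_j B$ and combine.

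For the combination step I would exploit Observation~\ref{PiJ}: it says that $n$-conservativity upgrades to $j$-conservativity for $j\leq n$ provided the left-hand side proves $\la n\ra\top$. Since $\la n+1\ra\varphi\vdash\la n\ra\la n\ra\varphi\vdash\la n\ra\top$ (using negative introspection to pass from $\la n+1\ra$ to $\la n\ra$, then monotonicity/weakening), the hypothesis of the observation is met, so from $A\equiv_n C$ I obtain $A\equiv_j C$. It then remains to prove the purely ``worm-arithmetical'' equivalence $C\equiv_j B$, namely
\[
\{Q_n^k(\varphi);k<\omega\}\ \equiv_j\ \{\la j\ra(\varphi\wedge Q_n^k(\varphi));k<\omega\}.
\]
The $(\vdash_j)$-direction from $B$ to $C$ is immediate, since each generator $\la j\ra(\varphi\wedge Q_n^k(\varphi))$ already has the form $\la j\ra\chi$, so anything $C$ proves of shape $\la j\ra\chi$ is matched once we check the reverse containment. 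The interesting inclusion is showing that if $C\vdash_{\glp}\la j\ra\chi$ then $B\vdash_{\glp}\la j\ra\chi$: from $\{Q_n^k(\varphi)\}_k\vdash\la j\ra\chi$ one extracts, by compactness of $\glp$-derivability, a single $Q_n^m(\varphi)$ with $Q_n^m(\varphi)\vdash\la j\ra\chi$; applying necessitation for $[j]$ under a $\la j\ra$ and monotonicity one wants to conclude $\la j\ra(\varphi\wedge Q_n^m(\varphi))\vdash\la j\ra\chi$, which lies in $B$.

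The step I expect to be the main obstacle is precisely this last manipulation of the $Q$-formulae under the modalities, mirroring the $(\supseteq)$/$(\subseteq)$ bookkeeping in the proof of Proposition~\ref{PiJArithmetic}. Concretely, to pass from $Q_n^m(\varphi)\vdash\la j\ra\chi$ to $\la j\ra(\varphi\wedge Q_n^m(\varphi))\vdash\la j\ra\chi$ one needs the fact, valid for $j\leq n$, that iterated $n$-consistency already absorbs a $\la j\ra$ layer; the relevant lemma is that $Q_n^{m+1}(\varphi)\vdash\la j\ra(\varphi\wedge Q_n^m(\varphi))$ together with the Proposition~\ref{PushBiggerDiamondOut}-style identities for pushing a smaller diamond $\la j\ra$ in and out of a larger one $\la n\ra$. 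Managing the index shift (the jump from $k$ to some $m$, and the need for a possibly larger iterate when re-importing the $\varphi$-conjunct) is the delicate part, but it is routine monotonicity-and-transitivity reasoning inside $\glp$ rather than anything genuinely new, and it runs in close parallel to the already-proved arithmetical counterpart.
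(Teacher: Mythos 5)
Your argument is correct, but it takes a genuinely different route from the paper's. The paper proves the theorem by induction on $n-j$: at each step it invokes the reduction property again, now at level $j$, to replace each generator $\la j+1\ra(\varphi\wedge Q_n^k(\varphi))$ by the family $\{Q_j^i(\varphi\wedge Q_n^k(\varphi)) ; i<\omega\}$, and then eliminates the nested $Q_j$-iteration via $\la j\ra(\varphi\wedge Q_n^{i+k}(\varphi))\vdash Q_j^i(\varphi\wedge Q_n^k(\varphi))$. You instead apply Theorem \ref{theorem:ReductionPropertyGLP} exactly once, at level $n$, descend from $\equiv_n$ to $\equiv_j$ by Observation \ref{PiJ} (legitimate, since $\la n+1\ra\varphi\vdash\la n\ra\top$), and finish with the elementary equivalence $\{Q_n^k(\varphi); k<\omega\}\equiv_j\{\la j\ra(\varphi\wedge Q_n^k(\varphi)); k<\omega\}$. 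Both halves of that last step check out: $Q_n^{k+1}(\varphi)=\la n\ra(\varphi\wedge Q_n^k(\varphi))\vdash\la j\ra(\varphi\wedge Q_n^k(\varphi))$ by negative introspection, and conversely, from $Q_n^m(\varphi)\vdash\la j\ra\chi$ (a single $m$ suffices because the $Q_n^k(\varphi)$ form a decreasing chain), necessitation, distribution and transitivity yield $\la j\ra(\varphi\wedge Q_n^m(\varphi))\vdash\la j\ra Q_n^m(\varphi)\vdash\la j\ra\la j\ra\chi\vdash\la j\ra\chi$; in particular the index shift you anticipate as the ``delicate part'' does not actually arise, since the extra $\la j\ra$ layer is absorbed by transitivity of $\la j\ra$ alone rather than by a larger $n$-iterate. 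As for what each approach buys: yours is shorter and isolates the single point where the reduction property is genuinely needed, while the paper's inductive descent produces along the way the intermediate characterizations at every level between $j$ and $n$ and mirrors the structure that is reused in the transfinite Theorem \ref{GeneralizedReductionProperty}.
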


\begin{proof}
By induction on $n - j$. \\

{\em Base case:} $j = n$. By the algebraic version of the Reduction Property (Theorem \ref{theorem:ReductionPropertyGLP}), we have that $$  \la n + 1 \ra \varphi \equiv_n  \{ Q^k_n(\varphi) ; k<\omega \} = \{  \la n \ra (\varphi \wedge Q_n^k(\varphi)); k < \omega \}.$$ 

{\em Inductive Step: }  Now we assume that
$ \la n + 1 \ra \varphi  \equiv_j \{  \la j + 1 \ra (\varphi \wedge Q_n^k(\varphi)); k < \omega \}$ ($\sf{I.H.}$) and want to prove: 
 $ \la n + 1 \ra \varphi \equiv_j\{  \la j  \ra (\varphi \wedge Q_n^k(\varphi)); k < \omega \}.$

For any $k$, by Reduction Property for $j < n$ we have:  
 \begin{equation} \label{11} \la j + 1 \ra (\varphi \wedge Q_n^k(\varphi) ) \equiv_j  \{ Q_j^i ( \varphi \wedge Q^k_n (\varphi) ) ; i < \omega \}.
\end{equation}\
This implies that \begin{equation}\label{add} \{  \la j +1 \ra (\varphi \wedge Q_n^k(\varphi)); k < \omega \} \equiv_{j}  \{ Q_j^i ( \varphi \wedge Q^k_n (\varphi) ) ; i, k < \omega \}. \end{equation}
To prove the main claim we assume that $\la n + 1 \ra \varphi \vdash_{\glp_\omega} \la j \ra \psi$. By $\sf{I.H.}$ and Observation \ref{PiJ} we get 
\begin{equation}\label{22}
 \{  \la j + 1 \ra (\varphi \wedge Q_n^k(\varphi)); k < \omega \} \vdash_{\glp_\omega} \la j \ra \varphi.
\end{equation}
By (\ref{add}) from (\ref{22}) we obtain 
 \begin{equation}\label{44} \{ Q_j^i ( \varphi \wedge Q^k_n (\varphi) ) ; i, k < \omega \} \vdash_{\glp_\omega} \la j \ra \varphi. \end{equation}
By an easy induction we can see that $ \la j \ra (\varphi \wedge Q_n^{i+k}(\varphi)) \vdash_{\glp_\omega}  Q_j^i ( \varphi \wedge Q_n^k (\varphi) ) $ for any $i$ and $k$. Thus we have that $\{  \la j \ra (\varphi \wedge Q_n^k(\varphi)); k < \omega \} \vdash_{\glp_\omega}  \{ Q_j^i ( \varphi \wedge Q_n^k (\varphi) ) ; i, k < \omega \} $ and obtain the conclusion:
$\{  \la j \ra (\varphi \wedge Q_n^k(\varphi)); k < \omega \}  \vdash_{\glp_\omega} \la j \ra \varphi.$

\end{proof}

\begin{corollary} \label{PijConsequences} For $0 \leq j \leq n$, $\{ \la j \ra \la n \ra^{k} \top \mid k < \omega \}  \equiv_j\la n + 1 \ra \top $

\end{corollary}

\subsection{Reduction property in transfinite setting}

When generalizing reduction property to transfinite setting, it is useful to define a more general version of the $Q$-formulae.

\begin{definition} Let $\varphi$ by any formula, then by induction on the length of the worm define:
  $$ Q(\top, \varphi) = \top$$
  $$ Q(\la \gamma \ra A, \varphi) = \la \gamma \ra ( \varphi \wedge Q(A,\varphi))$$
\end{definition}

The next lemma tells us that many important properties of these generalized $Q$-formulae actually resides in the worms involved.

\begin{lemma}\label{Qformulae} Let $A, B \in \Worms$ and $\varphi$ a $\glp_{\Lambda}$ formula, then
  $$ A \vdash B \Rightarrow Q(A,\varphi) \vdash Q(B,\varphi) $$

\end{lemma}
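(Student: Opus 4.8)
The plan is to realize the map $A\mapsto Q(A,\varphi)$ as the restriction to worms of a translation on \emph{all} $\glp_\Lambda$-formulae that preserves derivability, so that the hypothesis $A\vdash B$ is transported verbatim to $Q(A,\varphi)\vdash Q(B,\varphi)$.

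Concretely, I would first extend $Q(\cdot,\varphi)$ to a $\varphi$-relativization $r_\varphi$ defined on every $\glp_\Lambda$-formula by letting $r_\varphi$ commute with the Boolean connectives and setting $r_\varphi(\la\gamma\ra\psi):=\la\gamma\ra(\varphi\wedge r_\varphi(\psi))$; dually this means $r_\varphi([\gamma]\psi)=[\gamma](\varphi\to r_\varphi(\psi))$. A trivial induction on the length of a worm shows $r_\varphi(A)=Q(A,\varphi)$, so it suffices to prove the general claim that $r_\varphi$ preserves GLP-derivability: $\vdash_{\glp_\Lambda}\chi$ implies $\vdash_{\glp_\Lambda}r_\varphi(\chi)$. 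Since $r_\varphi$ commutes with $\to$, this at once gives $\vdash_{\glp_\Lambda}A\to B\Rightarrow\vdash_{\glp_\Lambda}r_\varphi(A)\to r_\varphi(B)$, which is exactly the statement of the lemma.

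I would establish preservation by induction on the length of a $\glp_\Lambda$-derivation, checking that the $r_\varphi$-image of every axiom is again a theorem and that the two rules are respected. The rules are immediate: Modus Ponens survives because $r_\varphi$ commutes with $\to$, and Necessitation survives because from $\vdash r_\varphi(\chi)$ one gets $\vdash\varphi\to r_\varphi(\chi)$ and hence $\vdash[\gamma](\varphi\to r_\varphi(\chi))=r_\varphi([\gamma]\chi)$. For the axioms, the tautologies, Distributivity, Transitivity, Negative introspection and Monotonicity all reduce, after pushing $r_\varphi$ inside and using the propositional equivalence $\varphi\to(X\to Y)\equiv X\to(\varphi\to Y)$, to the same axiom (or a routine normal-modal manipulation) applied to the relativized subformulae; here Proposition \ref{PushBiggerDiamondOut} and basic $\glp$-reasoning do the bookkeeping.

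The one genuinely delicate case, and the step I expect to be the main obstacle, is L\"ob. Writing $\beta:=\varphi\to r_\varphi(\alpha)$, the image $r_\varphi([\xi]([\xi]\alpha\to\alpha))$ unfolds to $[\xi]\big(\varphi\to([\xi](\varphi\to r_\varphi(\alpha))\to r_\varphi(\alpha))\big)$, and the crucial observation is that the matrix $\varphi\to([\xi]\beta\to r_\varphi(\alpha))$ is propositionally equivalent to $[\xi]\beta\to\beta$; thus the antecedent equals $[\xi]([\xi]\beta\to\beta)$ and L\"ob for $\beta$ yields $[\xi]\beta=r_\varphi([\xi]\alpha)$, which is precisely the $r_\varphi$-image of the conclusion. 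Once this case is in place the induction closes and the lemma follows. It is worth noting that this argument gives the stronger statement that $r_\varphi$ preserves derivability between arbitrary formulae, not only worms, and so is of independent use for the transfinite generalizations in this section.
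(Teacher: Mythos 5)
Your proposal is correct and takes essentially the same route as the paper: the paper defines the same relativization (writing $\psi^\chi$ for the result of replacing each $[\xi]\psi'$ by $[\xi](\chi\to\psi')$), observes that provability is preserved by induction on the length of a $\glp$ proof, and concludes via $Q(A,\varphi)=A^\varphi$. You simply carry out in detail the induction the paper declares ``easy to see,'' and your treatment of the L\"ob case (reading the matrix as $[\xi]\beta\to\beta$ for $\beta:=\varphi\to r_\varphi(\alpha)$) is exactly the right bookkeeping.
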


\begin{proof}
  For any \glp formulas $\psi$ and $\chi$, we denote by $\psi^\chi$ the formula that is obtained by replacing each subformula from $\psi$ of the form $[\xi] \psi'$ by $[\xi] (\chi \rightarrow \psi')$. By induction on the length of a \glp proof it is easy to see that if $\psi$ is provable, then so is $\psi^\chi$. Then the lemma follows, since $Q(A,\varphi) = A^\varphi$.
\end{proof}

%
%\begin{definition}[Order type] For $\gamma$ ordinal, $o_\gamma : \Worms \rightarrow On$ is the order type function corresponding to the usual $<_\gamma$-ordering on $\Worms$ (on $\Worms_\gamma$ it is linear). If $\tau \subseteq \Worms_\gamma$ then define $o_\gamma(\tau) = \sup \{ o_\gamma(A) ; A \in \tau \}$. We write $o$ and $<$ for $o_0$ and $<_0$.
%\end{definition}

Since for limit ordinals we cannot talk of a predecessor, we need something similar to that as captured in the notion of cofinality.

\begin{definition}[Cofinal set of worms] Let $\tau \subseteq \Worms_\gamma$ and $A \in \Worms_\gamma$. Then we say that $\tau$ is $<_\gamma$-cofinal in $A$ if $o_\gamma(\tau) = o_\gamma(A)$.

\end{definition}

\begin{comment}
\begin{proposition} If $\tau \subseteq \Worms_\gamma$ is $<_\gamma$-cofinal in $\la \zeta \ra \top$ with $\gamma < \zeta$, then $\tau$ is $<_0$-cofinal in $\la \zeta \ra \top$.
\end{proposition}

\begin{proof}
Since on $\Worms_\gamma$, $<_\gamma$ coincides with $<_0$.
\end{proof}
\end{comment}

The following definition allows us to speak of the set of ordinals that occur in a formula.

\begin{definition} If $\varphi$ is a formula of $\glp_\Lambda$ then $\modal \  \varphi$ is the set of all ordinals appearing in the modalities of $\varphi$. 
\end{definition}

In the following lemma, we recall that it is not the size of the ordinals that matter, rather it is just their comparison to the other ordinals in the formula that matters.

\begin{lemma}[Demotion and promotion lemma] Given $\varphi, \psi$ of $\glp_\Lambda$, let $s : \modal \{ \varphi \wedge \psi \} \rightarrow \omega $ be  a function that enumerates modalities of $\varphi, \psi$ in the increasing order and $\overline \psi$ the result of replacing all modalities $\la \zeta \ra$ in $\psi$ by $\la s(\zeta) \ra$.
  $$ \vdash_{\glp_\Lambda} \varphi \rightarrow \psi \Rightarrow \vdash_{\glp_\omega} \overline \varphi  \rightarrow \overline \psi \; (\sf{Demotion}) $$ 
$$ \vdash_{\glp_\omega} \overline \varphi  \rightarrow \overline \psi 
\vdash_{\glp_\Lambda} \varphi \rightarrow \psi \; (\sf{Promotion}) $$

\end{lemma}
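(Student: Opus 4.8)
The plan is to isolate the single structural fact that makes the lemma work: in every axiom and rule of $\glp_\Lambda$ the ordinal indices enter only through the relation $\xi < \zeta$ (negative introspection and monotonicity) or as a single index carried along unchanged (distributivity, transitivity, L\"ob, necessitation), while modus ponens and the propositional tautologies ignore the indices altogether. I would therefore first establish a \emph{relabeling principle}: if $P$ is a $\glp_\Lambda$-derivation, $M$ is the finite set of \emph{all} ordinals occurring anywhere in $P$, and $f : M \to \Lambda'$ is any strictly order-preserving map into $\Lambda'$, then replacing every modality $\la\xi\ra$ by $\la f(\xi)\ra$ throughout $P$ yields a valid $\glp_{\Lambda'}$-derivation of the relabeled conclusion. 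The proof is a routine induction on the length of $P$, checking each axiom schema and each rule; the only clause that uses anything about $f$ is the implication $\xi < \zeta \Rightarrow f(\xi) < f(\zeta)$, which is exactly strict monotonicity and is needed precisely for negative introspection and for monotonicity.

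Granting the relabeling principle, Demotion is immediate. Given a $\glp_\Lambda$-proof of $\varphi\to\psi$, let $M \supseteq \modal\{\varphi\wedge\psi\}$ be its finite ordinal set and let $e$ be the order isomorphism of $M$ onto an initial segment of $\omega$. The relabeling principle gives $\vdash_{\glp_\omega} e\varphi \to e\psi$. Since both $e$ restricted to $\modal\{\varphi\wedge\psi\}$ and $s$ are strictly monotone maps of $\modal\{\varphi\wedge\psi\}$ into $\omega$, there is a strictly monotone $g$ on the finite image of $e$ with $g\circ e = s$ on $\modal\{\varphi\wedge\psi\}$; a second application of the relabeling principle inside $\glp_\omega$ (where the target $\omega$ has unlimited room) then delivers $\vdash_{\glp_\omega}\overline\varphi\to\overline\psi$.

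For Promotion I would run the same machinery in reverse. Starting from a $\glp_\omega$-proof of $\overline\varphi\to\overline\psi$, let $N \supseteq s[\modal\{\varphi\wedge\psi\}]$ be its finite ordinal set and seek a strictly monotone $h : N \to \Lambda$ with $h(s(\zeta)) = \zeta$ for every $\zeta \in \modal\{\varphi\wedge\psi\}$; the relabeling principle then returns $\vdash_{\glp_\Lambda}\varphi\to\psi$. On $s[\modal\{\varphi\wedge\psi\}]$ the map $h$ is forced to be $s^{-1}$, and because $s$ enumerates $\modal\{\varphi\wedge\psi\}$ onto an initial segment of $\omega$, the remaining members of $N$ all lie \emph{above} that segment, so they only need to be sent strictly increasingly to ordinals above $\max\modal\{\varphi\wedge\psi\}$.

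The hard part will be exactly this last point: ensuring there is room in $\Lambda$ for the extraneous modalities that a $\glp_\omega$-proof may have introduced via necessitation but that do not occur in $\varphi\wedge\psi$. When $\Lambda$ is the class of all ordinals this is automatic, and when $\Lambda$ is a limit ordinal the finitely many surplus indices can always be placed below $\Lambda$ and above $\max\modal\{\varphi\wedge\psi\}$. To cover the remaining case cleanly I would first normalize the given $\glp_\omega$-proof so that it uses no modalities beyond those of its conclusion, i.e. confine all proof-modalities to $s[\modal\{\varphi\wedge\psi\}]$, after which no surplus indices remain and $h = s^{-1}$ suffices. Establishing that such a confinement is always available, so that the relabeling is never forced outside $\Lambda$, is the one genuinely non-bookkeeping step, and it is where I would concentrate the effort.
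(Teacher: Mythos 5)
The paper offers no proof of this lemma at all---it is delegated wholesale to the citation of Beklemishev---so there is no internal argument to compare yours against; what follows is an assessment of your sketch on its own terms. Your relabeling principle (induction on the derivation, using that every axiom and rule of $\glp_\Lambda$ depends on the indices only through their relative order) is correct and is the standard opening move. But it does not carry as far as you claim, even for Demotion. The ``second application of the relabeling principle'' that is supposed to take you from $\vdash_{\glp_\omega} e\varphi \to e\psi$ to $\vdash_{\glp_\omega} \overline\varphi \to \overline\psi$ can fail outright: if the derivation uses a modality $\delta$ with $\alpha < \delta < \beta$ for two modalities $\alpha,\beta$ of $\varphi\wedge\psi$ whose $s$-values are consecutive integers (which they are, since $s$ enumerates $\modal\{\varphi\wedge\psi\}$ onto an initial segment of $\omega$), then a strictly monotone $g$ on $e[M]$ with $g\circ e = s$ on $\modal\{\varphi\wedge\psi\}$ would have to send $e(\delta)$ to an integer strictly between $n$ and $n+1$. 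So Demotion in the stated form already presupposes that the derivation can be confined to the modalities of its conclusion.

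That confinement claim is the genuine gap, and you have correctly located it but left it entirely open---and it is not a bookkeeping normalization that one can expect to extract from a routine proof transformation. Necessitation, negative introspection and monotonicity genuinely introduce foreign indices, and modus ponens can eliminate them from the conclusion while they remain load-bearing in the derivation; an induction on proofs gives you invariance under monotone relabelings of \emph{all} modalities occurring in the proof, which is strictly weaker than invariance under relabelings of the modalities of the \emph{conclusion}. In the literature the missing ingredient (conservativity of $\glp$ over its restricted-signature fragments, equivalently invariance of provability under strictly monotone relabelings of the conclusion) is obtained semantically, e.g.\ via Beklemishev's reduction of $\glp$ to the Kripke-complete logic $\J$, or via universal models for the closed fragment---not syntactically. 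As it stands, your argument reduces the lemma to an unproved claim that carries essentially all of its mathematical content, so the proposal is a correct framing of the problem rather than a proof.
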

\begin{proof} See \cite{Beklemishev2014}.
\end{proof}

We are now ready to state and prove our transfinite generalization of the Reduction Property.

\begin{theorem} \label{GeneralizedReductionProperty}

Let $\gamma < \zeta \in On$ and $\varphi$ be any modal formula. If $\la \gamma \ra \tau \subseteq \Worms^{<\zeta}$ is $<_\gamma$-cofinal in $\la \zeta \ra \top$,  then
\[
\la \zeta \ra \psi \equiv_{\gamma} Q(\la \gamma \ra \tau,\psi).
\]
\end{theorem}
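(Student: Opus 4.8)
The plan is to unfold the conservativity $\equiv_\gamma$ into its two defining implications and handle them separately, writing $\la\gamma\ra\tau=\{\la\gamma\ra A\mid A\in\tau\}$ and $Q(\la\gamma\ra\tau,\psi)=\{Q(\la\gamma\ra A,\psi)\mid A\in\tau\}$, and reading $<_\gamma$-cofinality as $o_\gamma(\la\gamma\ra\tau)=o_\gamma(\la\zeta\ra\top)$, i.e.\ $\sup_{A\in\tau}o_\gamma(\la\gamma\ra A)=o_\gamma(\la\zeta\ra\top)$. The direction from the $Q$-set to $\la\zeta\ra\psi$ I would prove directly via the substitution operation from Lemma~\ref{Qformulae}; the reverse direction I would push through a demotion to the finite $\Pi_j$-result of Theorem~\ref{PijAlgebraic} together with the cofinality hypothesis.

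\emph{The direction $Q(\la\gamma\ra\tau,\psi)\vdash\la\gamma\ra\chi\Rightarrow\la\zeta\ra\psi\vdash\la\gamma\ra\chi$.} Here I would prove the stronger fact that $\la\zeta\ra\psi$ outright proves every member of $Q(\la\gamma\ra\tau,\psi)$. Fix $A\in\tau$. Since $\la\gamma\ra A\vdash\la\gamma\ra A$ trivially, we have $A<_\gamma\la\gamma\ra A$, so Lemma~\ref{GammaOrder} gives $o_\gamma(A)<o_\gamma(\la\gamma\ra A)$, while cofinality gives $o_\gamma(\la\gamma\ra A)\le o_\gamma(\la\zeta\ra\top)$. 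Hence $o_\gamma(A)<o_\gamma(\la\zeta\ra\top)$, i.e.\ $A<_\gamma\la\zeta\ra\top$, i.e.\ $\la\zeta\ra\top\vdash\la\gamma\ra A$. Applying the provability-preserving substitution $\theta\mapsto\theta^\psi$ from the proof of Lemma~\ref{Qformulae} (which satisfies $(\la\zeta\ra\top)^\psi=\la\zeta\ra\psi$ and $(\la\gamma\ra A)^\psi=Q(\la\gamma\ra A,\psi)$) to the provable implication $\la\zeta\ra\top\to\la\gamma\ra A$ yields $\la\zeta\ra\psi\vdash Q(\la\gamma\ra A,\psi)$. Thus $\la\zeta\ra\psi$ proves the whole set, and the implication follows.

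\emph{The direction $\la\zeta\ra\psi\vdash\la\gamma\ra\chi\Rightarrow Q(\la\gamma\ra\tau,\psi)\vdash\la\gamma\ra\chi$.} Only finitely many ordinals occur in the single entailment $\la\zeta\ra\psi\to\la\gamma\ra\chi$, so the Demotion and promotion lemma applies. Let $s$ enumerate them increasingly, set $j=s(\gamma)$ and $n+1=s(\zeta)$ (so $j\le n$ since $\gamma<\zeta$), and demote to $\la n+1\ra\bar\psi\vdash_{\glp_\omega}\la j\ra\bar\chi$. By Theorem~\ref{PijAlgebraic}, $\la n+1\ra\bar\psi\equiv_j\{\la j\ra(\bar\psi\wedge Q_n^k(\bar\psi))\mid k<\omega\}$, so this set proves $\la j\ra\bar\chi$; as $Q_n^{k+1}(\bar\psi)\vdash Q_n^k(\bar\psi)$ the formulas increase in strength with $k$, so a single $k$ suffices and $\la j\ra(\bar\psi\wedge Q_n^k(\bar\psi))\vdash_{\glp_\omega}\la j\ra\bar\chi$. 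Promoting back, the modality $n$ corresponds to the largest ordinal $\eta$ with $\gamma\le\eta<\zeta$ occurring among $\gamma,\psi,\chi$, giving $\la\gamma\ra(\psi\wedge Q(\la\eta\ra^k\top,\psi))\vdash_{\glp_\Lambda}\la\gamma\ra\chi$. Finally, from $\eta<\zeta$ one has $o_\gamma(\la\eta\ra^k\top)<o_\gamma(\la\zeta\ra\top)=\sup_{A\in\tau}o_\gamma(\la\gamma\ra A)$, so some $A\in\tau$ satisfies $o_\gamma(\la\eta\ra^k\top)<o_\gamma(\la\gamma\ra A)$, i.e.\ by Lemma~\ref{GammaOrder} $\la\gamma\ra A\vdash\la\gamma\ra\la\eta\ra^k\top$. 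Lemma~\ref{Qformulae} lifts this to $Q(\la\gamma\ra A,\psi)\vdash Q(\la\gamma\ra\la\eta\ra^k\top,\psi)=\la\gamma\ra(\psi\wedge Q(\la\eta\ra^k\top,\psi))$, whence $Q(\la\gamma\ra A,\psi)\vdash\la\gamma\ra\chi$ with $Q(\la\gamma\ra A,\psi)\in Q(\la\gamma\ra\tau,\psi)$, as required.

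I expect the promotion step to be the main obstacle. The enumeration $s$ witnessing the demotion contains the position $n+1$ of $\zeta$, yet the target formula $\la\gamma\ra(\psi\wedge Q(\la\eta\ra^k\top,\psi))$ need not mention $\zeta$; when $\zeta\notin\modal\,\psi\cup\modal\,\chi$ this position is vacated, so before promoting one must re-index the $\glp_\omega$-entailment by collapsing the gap at $n+1$. This is sound precisely because $\glp$-derivability depends only on the order type of the occurring modalities — the content of the Demotion and promotion lemma — applied once more to the finite entailment itself. The two order-theoretic inputs ($o_\gamma(A)<o_\gamma(\la\gamma\ra A)$ and $o_\gamma(\la\eta\ra^k\top)<o_\gamma(\la\zeta\ra\top)$ for $\gamma\le\eta<\zeta$) are standard properties of the order function $o_\gamma$ that I would cite from the worm-ordering references rather than reprove.
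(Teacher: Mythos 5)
Your proof is correct and follows essentially the same route as the paper's: the easy direction via the substitution $\theta\mapsto\theta^\psi$ underlying Lemma~\ref{Qformulae}, and the hard direction via demotion to $\glp_\omega$, Theorem~\ref{PijAlgebraic}, promotion, and the cofinality hypothesis. Your explicit handling of the re-indexing needed before promoting (since $\zeta$ may no longer occur in the demoted entailment) is a detail the paper's proof passes over silently, but it does not change the argument.
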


\begin{proof}

Since all the modalities in $\gamma \tau$ are smaller than $\zeta$, we observe that $\la \zeta \ra \top \vdash_{\glp_\Lambda} \la \gamma \ra \tau$ (that is, for each formula $\chi$ from $\la \gamma \ra \tau$ we have that $\la \zeta \ra \top \to \chi$ is provable). Consequently, by Lemma \ref{Qformulae} we obtain that $\la \zeta \ra \psi \supseteq Q(\la \gamma \ra \tau,\psi)$ whence in particular
$\la \zeta \ra \psi \supseteq_\gamma Q(\la \gamma \ra \tau,\psi)$

So, let us now focus on the reverse inclusion. We thus need to show that for arbitrary $\varphi$ we have $
\la \zeta \ra \psi \vdash_{\glp_\Lambda} \la \gamma \ra \varphi \Rightarrow  Q(\la \gamma \ra \tau,\psi) \vdash_{\glp_\Lambda} \la \gamma \ra \varphi .
$

We assume that $\la \zeta \ra \psi \vdash_{\glp_\Lambda} \la \gamma \ra \varphi$. 
By the Demotion Lemma we get 

\begin{equation}\label{111}  
\la n+1 \ra \overline \psi \vdash_{\glp_\omega} \la j \ra \overline \varphi 
\end{equation} 

for some $j \leq n < \omega$. Proposition \ref{PijConsequences} ($\Pi_j$-consequences) and (\ref{111}) imply 

\begin{equation}\label{222} 
\{  \la j\ra (\psi \wedge Q^k_n(\overline \psi)) \mid k < \omega \} \top \vdash_{\glp_\omega} \la j \ra \overline \varphi.
\end{equation} 

From (\ref{222}), by the Promotion Lemma we get 
\begin{equation}\label{333} 
\{  \la \gamma \ra (\psi \wedge Q^k_\theta(\overline \psi)) \mid k < \omega \}  \top \vdash_{\glp_\Lambda} \la \gamma \ra \varphi \end{equation} 
with $\theta \in \modal  ( \la \gamma \ra \varphi \wedge \psi)$ being the biggest ordinal in $ \la \gamma \ra \varphi \wedge \psi$ smaller than $\zeta$.  Then we are done, once we show that $Q(\la \gamma \ra \tau,\psi) \vdash_{\glp_\Lambda}  Q\big (\la \gamma \ra \la \theta \ra^k  \top, \psi \big ) $ for all natural numbers $k$. By Lemma \ref{Qformulae}, we just need to show that $\la \gamma \ra \tau \vdash \la \gamma \ra \la \theta \ra^k  \top $.

 Since $\theta, \gamma < \zeta$, we have that for each $k < \omega$, $ \la \gamma \ra \la \theta \ra^k \top <_\gamma \la \zeta \ra \top $. Then, by the $<_\gamma$-cofinality of $\la \gamma \ra \tau$ in $\la \zeta\ra \top$ we get that $\la \gamma \ra \tau  \vdash_{\glp_\Lambda} \la \gamma \ra \la \theta \ra^k \top$. Therefore, $Q(\la \gamma \ra \tau,\psi) \vdash \la \gamma \ra \varphi$ and this concludes the proof. 

\end{proof}

%$$ Q_{j,n}^{0}(\varphi) = \top$$

%$$ Q_{j,n}^{k+1}(\varphi)= \la j \ra \la n \ra (\varphi \wedge Q_{j,n}^{k}(\varphi)).$$

\section{Appendix}

{\bf Theorem \ref{JNReduction}.} Let \T be an elementary presented theory containing \EA, and let
\U be any $\Pi_{n+2}$-extension of \EA. Then $\U +\RFN_{\Sigma_{n+1}}(\T)$ is $\Pi_ {j+1}$-conservative over $\U + \Pi_{j+1}\mhyphen\RR^{jn}(\T)$. 

\begin{proof}
  By adapting the proof of Theorem \ref{ReductionFull} from \cite{Beklemishev2005_ReflectionPrinciples}. Consider a cut-free proof of a sequent $\Gamma$ of the form: \begin{equation} \label{CFP} \neg U, \neg \RFN_{\Sigma_{n+1}}(\T), \Pi \end{equation}  where $\neg U$ and $\neg \RFN_{\Sigma_{n+1}}(\T)$ are finite subsets of negations of axioms of $\U$ and of refletion principles, and $\Pi \subseteq\Pi_{j+1}$.  Let $\Gamma^-$ be the result of deleting  from $\Gamma$ all subformulae of $\neg U$ and the subformulae of $\neg \RFN_{\Sigma_{n+1}}(\T)$ of complexity bigger than $\Pi_{n+1}$.

  We show that if there is a cut-free proof of $\Gamma$, then  $\U + \Pi_{j+1}\mhyphen\RR^{jn}(\T) \vdash \bigvee \Gamma^-$. The proof is by induction on the height of a derivation of $\Gamma$.  Here we describe the main step in which the proof differs from our case, for more details see \cite{Beklemishev2005_ReflectionPrinciples}. Assume that the derivation is of the form
  \begin{prooftree}
    \AxiomC{${\sf Prf}_{\T}(t, \ulcorner \neg \varphi (\dot s ) \urcorner ), \Delta$}
    \AxiomC{$\varphi(s), \Delta$}
\RightLabel{$\wedge$-I}
\BinaryInfC{${\sf Prf}_{\T}(t, \ulcorner \neg \varphi (\dot s ) \urcorner )\wedge \varphi(s), \Delta$}
\end{prooftree}
%\begin{prooftree}
%  \AxiomC{$\bigvee \varphi(s) \vee \Delta^-$}
 % \LeftLabel{$\RR^{jn}$}
 
 %\UnaryInfC{$\la j \ra \la n \ra \big ( \bigvee \varphi(s) \vee \Delta^-\big )$}
%\end{prooftree}

By induction hypothesis, we have a derivation in $\U + \Pi_{j+1}\mhyphen\RR^{jn}(\T)$ of \begin{equation}\label{ih1} \varphi(s) \vee \bigvee \Delta^-\end{equation} \center and \begin{equation}\label{ih2}{\sf Prf}_{\T}(t, \ulcorner \neg \varphi (\dot s ) \urcorner )\vee \bigvee\Delta^-\end{equation} Then from (\ref{ih1}) by $\Pi_{j+1}\mhyphen\RR^{jn}$ we get $\la j \ra_{\T} \la n \ra_{\T} \big ( \varphi(\dot s) \vee  \bigvee \Delta^-\big )$ and by L\"{o}b's conditions and provable $\Sigma_{j+1}$-completeness we derive \begin{equation}\label{6}\la j \ra_{\T} \la n \ra_{\T} \varphi(\dot s) \vee \bigvee \Delta^-\end{equation} From (\ref{ih2}) we get $\square_{\T} \neg \varphi(\dot s)\vee \bigvee\Delta^-$ by $\exists$-introduction, which implies \begin{equation}\label{7}[j]_{\T}[n]_{\T} \neg \varphi(\dot s)\vee \bigvee\Delta^-\end{equation} Then by cut from (\ref{6}) and (\ref{7}) we get $\bigvee \Delta^-$, which concludes the proof of this case.
\end{proof}

{\bf Lemma \ref{PushBiggerDiamondOut}.}
Let $\Lambda$ be an ordinal, $\gamma > \zeta \in \Lambda$ and $\varphi, \psi$ are $\glp_\Lambda$ formulae, then:
\[ \vdash_{\glp_\Lambda} \la \gamma \ra ( \varphi \wedge \la \zeta \ra \psi) \leftrightarrow (\la \gamma \ra \varphi \wedge \la \zeta \ra \psi ) \] 

\begin{proof}
$(\rightarrow)$ Follows by monotonicity and transitivity and the fact that $ \vdash_{\glp_\Lambda} \la n \ra ( \varphi \wedge \psi ) \rightarrow \la n \ra \varphi \wedge \la n \ra \psi $: 
 \[ \la \gamma \ra ( \varphi \wedge \la \zeta \ra \psi) \Rightarrow \la \gamma \ra \varphi \wedge \la \gamma \ra \la \zeta \ra \psi \Rightarrow \la \gamma \ra \varphi \wedge \la \zeta \ra \la \zeta \ra \psi \Rightarrow \la \gamma \ra \varphi \wedge \la \zeta \ra \psi. \]

$(\leftarrow)$ By axiom $\la \zeta \ra \psi \rightarrow [\gamma] \la \zeta \ra \psi$ we have that $\la \gamma \ra \varphi \wedge \la \zeta \ra \psi \Rightarrow \la  \gamma \ra \varphi \wedge [\gamma] \la \zeta \ra \psi   \Rightarrow \la \gamma \ra (\varphi \wedge \la \zeta \ra \psi ).$
\end{proof}

{\bf Lemma \ref{GammaOrder}.}  \label{GammaOrder} For $A,B \subseteq \Worms_\gamma$, $ o_\gamma(A) < o_\gamma(B) \iff A <_\gamma B. $

\begin{proof} Applying lemmas from Section 4 of \cite{David}.
 $$ o_\gamma(A) < o_\gamma(B) \iff o(\gamma \downarrow A) > o(\gamma \downarrow B) \iff $$
$$ \gamma \downarrow A > \gamma \downarrow B \iff \gamma \uparrow (\gamma \downarrow B ) <_\gamma  \gamma \uparrow (\gamma \downarrow A ) $$
  $$\iff B <_\gamma A $$
\end{proof}

\begin{proposition}

 $ ( \EA + \la n + 1 \ra_{\EA}\top ) + \RR^n(\EA) \not \equiv (\EA  +  \la n + 1\ra_{\EA}\top) + \Pi_{n+1}\mhyphen \RR^n(\EA) $.
\end{proposition}

\begin{proof}

 Assume that $$ ( \EA + \la n + 1 \ra_{\EA}\top ) + \RR^n(\EA)  \equiv (\EA  +  \la n + 1 \ra_{\EA}\top) + \Pi_{n+1}\mhyphen \RR^n(\EA) .$$ By the reduction property we have that  $$ ( \EA + \la n + 1\ra_{\EA}\top ) + \RR^n(\EA)  \equiv_{\Pi_{n+1}}  ( \EA + \la n + 1 \ra_{\EA}\top ) + \RFN_{\Sigma_{n+1}}(\EA)$$ and by Proposition \ref{RFNnCon} $$ ( \EA + \la n + 1\ra_{\EA}\top ) + \RR^n(\EA)  \equiv_{\Pi_{n+1}}   ( \EA + \la n + 1 \ra_{\EA}\top ) + \la n + 1\ra_{\EA}\top$$ Hence we have that $ ( \EA + \la n + 1\ra_{\EA}\top ) + \RR^n(\EA)  \equiv_{\Pi_{n+1}}   ( \EA + \la n + 1 \ra_{\EA}\top )$. But now, since  $$(\EA + \la n + 1\ra_{\EA}\top ) + \RR^n(\EA)  \vdash \la n \ra_{\EA} \la n + 1\ra_{\EA}\top$$ we arrive at a contradiction:  $$\EA + \la n + 1\ra_{\EA}\top  \vdash   \la n \ra_{\EA +\la n + 1\ra_{\EA}\top} \top.$$

\end{proof}

\bibliography{bib_worms}
\bibliographystyle{splncs04}

\end{document}